\documentclass[oneside,reqno]{amsart}
\usepackage{amssymb}
\usepackage{amsmath}
\usepackage{amsthm}
\usepackage{enumerate}
\usepackage{graphicx}
\usepackage{amsfonts}

\pagestyle{myheadings}

\numberwithin{equation}{section}

\newcommand{\N}{\mathbb N}

\newtheorem{theorem}{Theorem}[section]

\newtheorem{lem}{Lemma}[section]
\newtheorem{rem}{Remark}[section]

\begin{document}
\title{Rate of convergence of Gupta-Srivastava operators based on certain parameters}
\author{Ram Pratap and Naokant Deo}
\date{}
\address{Delhi Technological University\newline
\indent Formerly Delhi College of Engineering\newline
\indent Department of Applied Mathematics\newline
\indent Bawana Road, 110042 Delhi, India}
\email{rampratapiitr@gmail.com}
\email{naokantdeo@dce.ac.in}
\keywords{Baskakov operators, Hypergeometric function, Lipschitz type space, Ditzian-Totik modulus of smoothness, Function of Bounded variation }
\subjclass[2010]{41A25, 41A36}
\begin{abstract} 
In the present paper, we consider the B\'ezier variant of the general family of Gupta-Srivastava operators \cite{GS:18}. For the proposed operators, we discuss the rate of convergence by using of Lipschitz type space, Ditzian-Totik modulus of smoothness, weighted modulus of continuity and functions of bounded variation.\end{abstract}
\maketitle\markboth{Ram Pratap and Naokant Deo}{Gupta-Srivastava Approximation Operators}

\section{introduction}
In the year $2003$,  Srivastava et al. \cite{SG:03} introduced a sequence of positive linear operators and studied the convergence properties. After two year Ispir and Y\"{u}ksel proposed the B\'ezier variant of Srivastava-Gupta operators  and discussed the rate of convergence for the proposed operators  \cite{IY:05}. Several researchers have taken in to out with different generalised form of the Srivastava-Gupta operators (see \cite{TMM:15}, \cite{Deo:12}, \cite{KA:17}, \cite{MP:15}, \cite{VA:12}, \cite{Yadav:14}). One of them, Yadav \cite{Yadav:14} studied the modification of Srivastava-Gupta operators which preserve the constant functions as well as linear functions, established the Voronovskaya type theorem and statistical convergence. In $2017$, Neer et al. \cite{NIA:17} proposed the B\'ezier variant of the operators, which was introduced by Yadav \cite{Yadav:14} and discussed several convergence properties. \\

Recently, Gupta et al. \cite{GS:18} proposed a general family of a positive linear operator, which preserve constant functions as well as linear functions for all  $c \in \Bbb N \cup \left\{ 0\right\}\cup\left\{-1\right\}$. They have considered the general sequence of positive linear operators containing some well-known operators as special cases of Srivastav-Gupta operators~\cite{SG:03}, and  for $m \in \Bbb Z,$ and $c \in \Bbb N \cup \left\{ 0\right\}\cup\left\{-1\right\}$ operators defined as:
\begin{align}\label{L:1}
\nonumber{L_{n,m}^{c}}\left( {f\left( t \right);x} \right)& = \left\{ {n + (m + 1)c} \right\}\sum\limits_{k = 1}^\infty  {{p_{n + mc,k}}} (x;c)\int\limits_0^\infty  {{p_{n + (m + 2)c,k - 1}}(t;c)} f(t)dt\\
 &\hspace{3cm}+ {p_{n + mc,0}}(x;c)f(0),
\end{align}
where \[{p_{n,k}}\left( {x;c} \right) = \frac{{{{\left( { - x} \right)}^k}}}{{k!}}\phi _{n,c}^{(k)}(x),\]
and 
\[\phi _{n,c}^{(k)} = \left\{ {\begin{array}{*{20}{l}}
  {{e^{ - nx}},\qquad \;\;\;\;\;\; c = 0} \\ 
  {{{(1 - x)}^{ - n}},\;\;\;\;\;\; c =  - 1} \\ 
  {{{(1 + cx)}^{\frac{{ - n}}{c}}},\;\;\;\;c = 1,2,3,...} 
\end{array}} \right.\]
Inspired from the above works, depending upon some parameter $\alpha \ge 1$ we propose here the following sequence of operators (also called B\'ezier variant) (\ref{L:1}) as follows:
\begin{align}\label{L:2}
\nonumber F_{n,m}^{c,\alpha}\left( {f\left( t \right);x} \right) &= \left\{ {n + (m + 1)c} \right\}\sum\limits_{k = 1}^\infty  {Q_{n + mc}^{(\alpha )}} (x;c)\int\limits_0^\infty  {{p_{n + (m + 2)c,k - 1}}(t;c)} f(t)dt\\
&\hspace{1.5cm}+ Q_{n + mc,0}^{(\alpha )}(x;c)f(0),
\end{align}
where $Q_{n + mc,k}^{(\alpha )}(x;c) = {\left( {{J_{n + mc,k}}(x,c)} \right)^\alpha } - {\left( {{J_{n + mc,k + 1}}(x,c)} \right)^\alpha }$, $\alpha\ge1$ with
\[{J_{n + mc,k}}(x,c) = \sum\limits_{j = k}^\infty  {{p_{n + mc,k}}(x,c)} ,\]
where $k< \infty$ and otherwise zero. It is obvious, the operators $F_{n,m}^{c,\alpha}\left( {.;x} \right)$ are the linear positive operator. For $\alpha=1$ the operators (\ref{L:2}) reduce to (\ref{L:1}).\\
The special cases of operators (\ref{L:2}) are given below:
\begin{itemize}
\item  [(i)] For $c=0$, $\alpha=1$ and ${\phi _{n,0}}(x) = {e^{ - nx}}$, we get Phillips operators  
\begin{align*}
L_{n,m}^0\left( {f\left( t \right);x} \right) &= n\sum\limits_{k = 1}^\infty  {{p_{n + mc,k}}} (x;0)\int\limits_0^\infty  {{p_{n + mc,k - 1}}(t;0)} f(t)dt\\
&\hspace{1.5cm}+ {p_{n,0}}(x;0)f(0),
\end{align*}	
where $${p_{n,k}}(x,0) = \frac{{{e^{ - nx}}{{(nx)}^k}}}{{k!}}\;\;\text{and}\;\;x\in[0,\infty).$$
\item  [(ii)] For $c \in \Bbb N$, $\alpha=1$ and ${\phi _{n,c}}(x) = {(1 + cx)^{ - \frac{n}{c}}}$, we get genuine Baskakov-Durrmeyer type operators. These operators are similar to (\ref{L:1}) except for $c=\{0,-1\}$, called summation integral type of operators,
where $${p_{n,k}}(x;c) = \frac{{{{\left( {\frac{n}{c}} \right)}_k}}}{{k!}}\frac{{{{\left( {cx} \right)}_k}}}{{{{\left( {1 + cx} \right)}^{\frac{n}{c} + k}}}},$$ and ${(n)_i}$ denotes the rising factorial given by \[{(n)_i} = n(n + 1)(n + 2)...(n + i - 1)\;\;\&\;\;(n)_{0}=1(i \in \Bbb N).\]
\item  [(iii)] For $c=-1$, $\alpha=1$ and ${\phi _{n,-1}}(x)=(1-x)^{-n}$, we have a sequence of Bernstein-Durrmeyer operators
\begin{align}
\nonumber  {L_{n,m}^{-1}}\left( {f,x} \right) &= \left( {n - m - 1} \right)\sum\limits_{k = 1}^{n - m - 1} {{p_{n - m,k}}\left( {x, - 1} \right)} \int\limits_0^1 {{p_{n - m - 2,k - 1}}\left( {t, - 1} \right)f(t)dt} \\
&\hspace{1.5cm} + {p_{n - m,0}}\left( {x, - 1} \right)f(0) + {p_{n - m,n - m}}\left( {x, - 1} \right),
\end{align}
where $${p_{n,k}}(x; - 1) = \left( \begin{array}{l}
n\\
k
\end{array} \right){x^k}{(1 - x)^{n - k}}.$$
\end{itemize}
The purpose of this article is to investigate the approximation results by  using of Lipchitz type space, Ditzian-Totik modulus of smoothness, weighted modulus of continuity and functions of bounded variation.

In the year $2008$, Deo et al. an interesting modification of introduced a modified Bernstein operators

\section{Auxiliary Results}
In this section, we give some auxiliary results and with help of these results we study our main results.\\

Let $C[0,\infty)$ denotes the space of all continuous functions in $[0,\infty)$ and let $C_B[0,\infty)$ be the space of all continuous and bounded functions in $[0,\infty)$.
\begin{lem}\label{lem-1}
Let $f(t)=e_{i}=t^{i}$, $i=0,1,2,3,4,\;\;\;c \in \Bbb N \cup \left\{ 0 \right\}\cup\left\{-1\right\}$ and $m \in \mathbb{Z} $, then we have
\begin{enumerate}
\item[(i)]${L_{n,m}^{c}}\left( {e_0;x} \right)= 1$;
\item[(ii)]${L_{n,m}^{c}}\left( {e_1;x} \right)= x$;
\item[(iii)] ${L_{n,m}^{c}}\left( {{e_2};x} \right)= \frac{{(n + (m + 1)c)}}{{(n + (m - 1)c)}}{x^2} + \frac{2}{{(n + (m - 1)c)}}x$;
\item[(iv)] ${L_{n,c}}\left( {{e_3};x} \right) = \frac{{(n + (m + 1)c)(n + (m + 2)c)}}{{(n + (m - 1)c)(n + (m - 2)c)}}{x^3}$\\ $+ \frac{{6(n + (m + 1)c)}}{{(n + (m - 1)c)(n + (m - 2)c)}}{x^2} + \frac{6}{{(n + (m - 1)c)(n + (m - 2)c)}}x;$
\item[(v)] ${L_{n,c}}\left( {{e_4};x} \right) = \frac{{(n + (m + 1)c)(n + (m + 2)c)(n + (m + 3)c)}}{{(n + (m - 1)c)(n + (m - 2)c)(n + (m - 3)c)}}{x^4}$\\
$ + \frac{{12(n + (m + 1)c)(n + (m + 2)c)}}{{(n + (m - 1)c)(n + (m - 2)c)(n + (m - 3)c)}}{x^3} + \frac{{36(n + (m + 1)c)}}{{(n + (m - 1)c)(n + (m - 2)c)(n + (m - 3)c)}}{x^2}$\\
$ + \frac{{24}}{{(n + (m - 1)c)(n + (m - 2)c)(n + (m - 3)c)}}x$.
\end{enumerate}
All the moments of operators $(\ref{L:1})$  can be obtained in terms of hyper geometric function of order $r\in\N$ for details see~\cite{GS:18}.
\end{lem}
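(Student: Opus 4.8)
The plan is to reduce the evaluation of $L_{n,m}^{c}(e_j;x)$ to two ingredients: the raw moments of the continuous kernel $\int_0^\infty p_{n+(m+2)c,k-1}(t;c)\,t^j\,dt$, and the factorial moments of the discrete kernel $\sum_{k\ge 0}p_{n+mc,k}(x;c)\cdot(\text{polynomial in }k)$. Since $e_j(0)=0$ for $j\ge 1$, the point-mass term $p_{n+mc,0}(x;c)f(0)$ contributes only when $j=0$, so it can be put aside for $j\ge 1$ and dealt with at the end for $j=0$.

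First I would evaluate the inner integral. Writing $N=n+(m+2)c$, one has, for every admissible $c$,
\[\int_0^\infty p_{N,k-1}(t;c)\,t^j\,dt=\frac{k(k+1)\cdots(k+j-1)}{\prod_{i=1}^{j+1}\bigl(n+(m+2-i)c\bigr)},\]
the numerator being the empty product $1$ when $j=0$. For $c=0$ this is the elementary Gamma integral; for $c>0$ and for $c=-1$ it is a Beta-function evaluation after the substitution $u=ct$ (respectively, directly on $[0,1]$), and all three cases are subsumed by the hypergeometric identity used in \cite{GS:18}, valid as soon as $n$ is large enough for the $\Gamma$-ratios to be defined. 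The crucial observation is that $\prod_{i=1}^{j+1}(n+(m+2-i)c)$ begins with the factor $n+(m+1)c$, which cancels the normalizing constant in front of the operator. Hence for $j\ge 1$,
\[L_{n,m}^{c}(e_j;x)=\frac{1}{\prod_{i=0}^{j-1}\bigl(n+(m-i)c\bigr)}\sum_{k=0}^{\infty}p_{n+mc,k}(x;c)\,k(k+1)\cdots(k+j-1),\]
where one may start the sum at $k=0$ because the rising factorial vanishes there.

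Next I would compute the discrete sum. From the generating function $\sum_{k\ge 0}p_{M,k}(x;c)u^k=\bigl(1+cx(1-u)\bigr)^{-M/c}$ (with the limiting forms $e^{Mx(u-1)}$ for $c=0$ and $(1-x+xu)^{M}$ for $c=-1$), differentiating $r$ times and setting $u=1$ gives the falling-factorial moments
\[\sum_{k\ge 0}p_{M,k}(x;c)\,k(k-1)\cdots(k-r+1)=x^r\prod_{i=0}^{r-1}(M+ic).\]
Expanding the rising factorial $k(k+1)\cdots(k+j-1)$ into a combination of falling factorials via Stirling numbers of the second kind — the coefficient vectors being $\{1\}$, $\{1,2\}$, $\{1,6,6\}$, $\{1,12,36,24\}$ for $j=1,2,3,4$ — and inserting these moments with $M=n+mc$ (so $M+ic=n+(m+i)c$), one recovers exactly the polynomials in (i)–(v): the leading $x^j$-coefficient is $\prod_{i=0}^{j-1}(n+(m+i)c)/\prod_{i=0}^{j-1}(n+(m-i)c)$, and the lower-order terms are the Stirling coefficients times the truncated products. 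The case $j=0$ is separate: the integral reduces each summand to $p_{n+mc,k}(x;c)$, and adding back $p_{n+mc,0}(x;c)$ gives $\sum_{k\ge 0}p_{n+mc,k}(x;c)=1$; the case $j=1$ uses only $\sum_k k\,p_{n+mc,k}(x;c)=(n+mc)x$, divided by $n+mc$, giving $x$.

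I expect the only real obstacle to be bookkeeping: tracking the shifted parameters $n+(m\pm\ell)c$ through both the Beta-integral and the Stirling expansion, and checking that every $\Gamma$-ratio is legitimate, which forces the usual lower bound on $n$ in terms of $m$, $c$ and $j$. The cleanest way to cover $c\in\{-1,0\}\cup\N$ uniformly is to route the argument through the ${}_2F_1$/${}_1F_1$ representations of the moments already recorded in \cite{GS:18}, so that both the continuous integral and the discrete sum collapse to Gauss–Vandermonde summations valid for all admissible $c$; alternatively one simply repeats the computation in each of the three cases.
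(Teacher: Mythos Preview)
Your argument is correct and complete. The paper itself gives no proof of this lemma: it simply records the formulas and refers the reader to \cite{GS:18}, where the moments are computed via a hypergeometric representation. Your route---evaluating the Beta/Gamma integral for the continuous kernel, then using the factorial moments of the discrete kernel obtained from the generating function $\sum_{k\ge 0}p_{M,k}(x;c)u^k$---is an explicit, self-contained computation that the paper does not carry out. The trade-off is clear: the paper's deferral to \cite{GS:18} gives all moments $r\in\N$ at once through a single ${}_2F_1$ identity, while your expansion requires the rising-to-falling factorial conversion for each $j$ but is entirely elementary and avoids any appeal to special-function identities.

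One small terminological slip: the coefficients $\{1\},\{1,2\},\{1,6,6\},\{1,12,36,24\}$ that convert the rising factorial $k(k+1)\cdots(k+j-1)$ into falling factorials $(k)_r$ are the (unsigned) Lah numbers $L(j,r)=\binom{j-1}{r-1}\frac{j!}{r!}$, not Stirling numbers of the second kind (the latter convert ordinary powers $k^j$ into falling factorials). The numerical values you list are the Lah numbers, so the computation is right; only the name needs correcting.
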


\begin{lem}\label{lm1}
The central moment of the operators (\ref{L:1}) is given as:
 \[{\mu _{n,s}^{c}}(x) = {L_{n,c}}({(t - x)^s};x),\]
 for $s= 2, 4 $ then, we have
\begin{enumerate}
\item[(ii)] ${\mu _{n,2}^{c}}(x) = \frac{{2x(1 + cx)}}{{(n + (m - 1)c)}};$
\item[(iii)]${\mu _{n,4}^{c}}(x) = \frac{{12{c^2}(n + (m + 7)c)}}{{(n + (m - 1)c)(n + (m - 2)c)(n + (m - 3)c)}}{x^4} + \frac{{24{c^2}(13n + (13m + 1)c)}}{{(n + (m - 1)c)(n + (m - 2)c)(n + (m - 3)c)}}{x^3}$\\
 $+ \frac{{12(n + (m + 9)c)}}{{(n + (m - 1)c)(n + (m - 2)c)(n + (m - 3)c)}}{x^2} + \frac{{24}}{{(n + (m - 1)c)(n + (m - 2)c)(n + (m - 3)c)}}x.$
\end{enumerate}
\end{lem}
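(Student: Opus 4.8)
The plan is to obtain both central moments directly from the explicit formulas for $L_{n,m}^{c}(e_j;x)$, $j=0,\dots,4$, recorded in Lemma~\ref{lem-1}, together with the linearity of $L_{n,m}^{c}$. Writing $\mu_{n,s}^{c}(x)=L_{n,m}^{c}\bigl((t-x)^{s};x\bigr)$ and expanding $(t-x)^{s}$ by the binomial theorem gives
\[
\mu_{n,s}^{c}(x)=\sum_{j=0}^{s}\binom{s}{j}(-x)^{\,s-j}\,L_{n,m}^{c}(e_{j};x),
\]
so once the right-hand sides of Lemma~\ref{lem-1} are inserted the task reduces to collecting like powers of $x$.

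For $s=2$ this is a one-line computation: by (i)--(iii) of Lemma~\ref{lem-1},
\[
\mu_{n,2}^{c}(x)=L_{n,m}^{c}(e_2;x)-2x\,L_{n,m}^{c}(e_1;x)+x^{2}L_{n,m}^{c}(e_0;x)=\Bigl(\tfrac{n+(m+1)c}{n+(m-1)c}-1\Bigr)x^{2}+\tfrac{2}{n+(m-1)c}\,x,
\]
and since $\tfrac{n+(m+1)c}{n+(m-1)c}-1=\tfrac{2c}{n+(m-1)c}$ the right side factors as $\tfrac{2x(1+cx)}{n+(m-1)c}$, which is part~(ii). For $s=4$ I would substitute (i)--(v) of Lemma~\ref{lem-1} into
\[
\mu_{n,4}^{c}(x)=L_{n,m}^{c}(e_4;x)-4x\,L_{n,m}^{c}(e_3;x)+6x^{2}L_{n,m}^{c}(e_2;x)-4x^{3}L_{n,m}^{c}(e_1;x)+x^{4}L_{n,m}^{c}(e_0;x),
\]
put every term over the common denominator $D:=(n+(m-1)c)(n+(m-2)c)(n+(m-3)c)$, and read off the coefficients of $x^{4},x^{3},x^{2},x$. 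The only spot needing attention is the top two coefficients, where the leading powers of $n$ must cancel — which is precisely what makes $\mu_{n,4}^{c}$ of order $n^{-2}$: e.g.\ the coefficient of $x^{4}$ equals
\[
\frac{(n+(m+1)c)(n+(m+2)c)(n+(m+3)c)}{D}-\frac{4(n+(m+1)c)(n+(m+2)c)}{(n+(m-1)c)(n+(m-2)c)}+\frac{6(n+(m+1)c)}{n+(m-1)c}-3,
\]
whose numerator over $D$ is a polynomial in $n$ in which the cubic and quadratic terms cancel, leaving exactly $12c^{2}\bigl(n+(m+7)c\bigr)$; the $x^{3}$ coefficient is produced by the same cancellation, and the $x^{2}$ and $x$ coefficients come from combining the lower-order terms of $L_{n,m}^{c}(e_4;x)$ with the $x^{2}$-term of $-4x\,L_{n,m}^{c}(e_3;x)$ and the $x$-term of $6x^{2}L_{n,m}^{c}(e_2;x)$. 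Reassembling the four coefficients over $D$ yields the expression in part~(iii).

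The argument is entirely mechanical, so the real obstacle is only bookkeeping: the $s=4$ case forces one to expand a handful of cubic products in $n$ and to verify the exact cancellations of their leading terms, and the safest way to keep this under control is to fix the common denominator $D$ at the outset and handle each power of $x$ separately. Alternatively one could shorten some of the algebra using the hypergeometric representation of the moments mentioned just after Lemma~\ref{lem-1}, or a recursion expressing $\mu_{n,s+1}^{c}$ through lower-order moments; but for the two orders $s=2,4$ needed here the direct substitution above is the most transparent route.
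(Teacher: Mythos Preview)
Your approach is correct and is exactly the one implicit in the paper: the lemma is stated there without proof because it follows by linearity of $L_{n,m}^{c}$ and the binomial expansion of $(t-x)^{s}$ from the moments listed in Lemma~\ref{lem-1}. Your verification of the $x^{4}$ coefficient (the cancellation of the cubic and quadratic terms in $n$, leaving $12c^{2}(n+(m+7)c)$) is the nontrivial step, and it is right.
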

\begin{rem}\label{rem-1}
If $n$ is sufficiently large then the central moment of the operators (\ref{L:1}) are:
 $${\mu _{n,2}^{c}}(x) \le C\frac{{x(1 + cx)}}{{n }},$$
 and 
 \[\mu _{n,4}^c(x) \le C\frac{{{{(x(1 + cx))}^2}}}{{{n^2}}},\]
where $C>0$ is constant.
\end{rem}
\begin{rem}\label{rem-2}
We know that $\sum\limits_{j = 0}^\infty  {{p_{n + mc,j}}(x,c)}  = 1$ and from (\ref{L:2}), we have 
\begin{align}\label{E:2}
\nonumber F_{n,m}^{c, \alpha }\left( {1;x} \right)& = \left\{ {n + (m + 1)c} \right\}\sum\limits_{k = 1}^\infty  {Q_{n + mc}^{(\alpha )}} (x;c)\int\limits_0^\infty  {{p_{n + (m + 2)c,k - 1}}(t;c)} dt\\ 
\nonumber &\hspace{2cm}+ Q_{n + mc,0}^{(\alpha )}(x;c)\\
\nonumber& = \sum\limits_{k = 0}^\infty  {Q_{n + mc}^{(\alpha )}} (x;c) = {\left( {{J_{n+mc,0}}(x,c)} \right)^{(\alpha )}}\\
\nonumber&= {\left( {\sum\limits_{k = 0}^\infty  {{p_{n + mc,j}}(x,c)} } \right)^{(\alpha )}} = 1.
\end{align}
\end{rem}
\begin{lem}\label{lem-2}
For each $f\in C_B[0,\infty)$ then, we have
\[\left| {F_{n,m}^{c, \alpha}\left( {f(t);x} \right)} \right| \le \left\| f \right\|.\]
\end{lem}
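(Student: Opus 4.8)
The plan is to use that $F_{n,m}^{c,\alpha}(\cdot;x)$ is a \emph{positive} linear operator which reproduces the constant function, so that the estimate follows from monotonicity and normalization. First I would record the nonnegativity of the B\'ezier weights. Since $J_{n+mc,k}(x,c)=\sum_{j=k}^{\infty}p_{n+mc,j}(x,c)$ is a tail of a series of nonnegative terms, the sequence $k\mapsto J_{n+mc,k}(x,c)$ is nonincreasing and takes values in $[0,1]$; as $u\mapsto u^{\alpha}$ is nondecreasing on $[0,1]$ for $\alpha\ge1$, we get
\[
Q_{n+mc,k}^{(\alpha)}(x;c)=\bigl(J_{n+mc,k}(x,c)\bigr)^{\alpha}-\bigl(J_{n+mc,k+1}(x,c)\bigr)^{\alpha}\ge 0 .
\]
Together with $p_{n+(m+2)c,k-1}(t;c)\ge 0$ this makes every term in the defining expression (\ref{L:2}) a nonnegative functional evaluated at $f$.

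Next I would apply the triangle inequality termwise inside the series and inside the integral in (\ref{L:2}); since $f\in C_B[0,\infty)$ each inner integral $\int_0^\infty p_{n+(m+2)c,k-1}(t;c)\,|f(t)|\,dt$ is finite and the outer series converges by comparison with the case $f\equiv1$, so this is legitimate and yields $\bigl|F_{n,m}^{c,\alpha}(f(t);x)\bigr|\le F_{n,m}^{c,\alpha}(|f(t)|;x)$. Using $|f(t)|\le\|f\|$ for all $t\ge0$ together with the positivity and linearity of the operator gives $F_{n,m}^{c,\alpha}(|f(t)|;x)\le\|f\|\,F_{n,m}^{c,\alpha}(1;x)$, and Remark~\ref{rem-2} (which shows $F_{n,m}^{c,\alpha}(1;x)=\bigl(J_{n+mc,0}(x,c)\bigr)^{\alpha}=1$ via $\sum_{j\ge0}p_{n+mc,j}(x,c)=1$) finishes the chain, so $\bigl|F_{n,m}^{c,\alpha}(f(t);x)\bigr|\le\|f\|$ for every $x\ge0$.

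There is essentially no serious obstacle here: the only point deserving a word of care is justifying that the triangle inequality may be moved inside the infinite sum and the improper integral, which is immediate once the nonnegativity of the weights $Q_{n+mc,k}^{(\alpha)}(x;c)$ is in hand and $f$ is bounded. Thus the ``work'' amounts to the bookkeeping that the $\alpha$-B\'ezier weights stay nonnegative and, by Remark~\ref{rem-2}, sum to one.
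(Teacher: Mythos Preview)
Your proof is correct and follows exactly the approach the paper intends: the authors simply write that the result is immediate from Remark~\ref{rem-2} and omit the details, so your argument is precisely the fleshed-out version of theirs (positivity of the B\'ezier weights together with $F_{n,m}^{c,\alpha}(1;x)=1$). There is nothing to add.
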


\begin{proof}
It is easy to prove the above result by using Remark \ref{rem-2}, therefore we skip the proof.
\end{proof}
\begin{lem}\label{lem-3}
For every $f\in C_B[0,\infty)$ then, we have
\[\left| {F_{n,m}^{c, \alpha}\left( {f(t);x} \right)} \right| \le \alpha {L_{n,m}^{c}}\left( {\left\| f \right\|;x} \right).\]
\end{lem}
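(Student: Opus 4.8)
The plan is to reduce the statement to a single elementary inequality about the power function. The B\'ezier weights $Q_{n+mc,k}^{(\alpha)}(x;c)$ are built from the tail sums ${J_{n+mc,k}}(x,c)=\sum_{j=k}^{\infty}{p_{n+mc,j}}(x,c)$, which, since the $p_{n+mc,j}(x,c)$ are nonnegative and sum to $1$, satisfy $0\le {J_{n+mc,k+1}}(x,c)\le {J_{n+mc,k}}(x,c)\le 1$ together with the telescoping identity ${J_{n+mc,k}}(x,c)-{J_{n+mc,k+1}}(x,c)={p_{n+mc,k}}(x,c)$. So the first step is to observe that for $\alpha\ge 1$ and $0\le b\le a\le 1$ one has $a^{\alpha}-b^{\alpha}\le\alpha\,(a-b)$: by the mean value theorem $a^{\alpha}-b^{\alpha}=\alpha\,\xi^{\alpha-1}(a-b)$ for some $\xi\in[b,a]\subseteq[0,1]$, and $\xi^{\alpha-1}\le 1$. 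Taking $a={J_{n+mc,k}}(x,c)$ and $b={J_{n+mc,k+1}}(x,c)$ then gives, for every $k\ge 0$,
\[
0\le Q_{n+mc,k}^{(\alpha)}(x;c)\le \alpha\,{p_{n+mc,k}}(x,c).
\]

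With this pointwise domination in hand, I would simply insert it into the series (\ref{L:2}). Taking absolute values and using the triangle inequality, the nonnegativity of $p_{n+(m+2)c,k-1}(t;c)$, and the bound $|f(t)|\le\|f\|$, each term of $F_{n,m}^{c,\alpha}(f(t);x)$ is dominated by $\alpha$ times the corresponding term of ${L_{n,m}^{c}}(\|f\|;x)$, with the isolated $k=0$ term handled by the same estimate applied to $a=1={J_{n+mc,0}}(x,c)$ and $b={J_{n+mc,1}}(x,c)=1-p_{n+mc,0}(x,c)$. Summing over $k$ yields $\left|F_{n,m}^{c,\alpha}(f(t);x)\right|\le\alpha\,{L_{n,m}^{c}}(\|f\|;x)$. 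Equivalently one first obtains the sharper bound $\left|F_{n,m}^{c,\alpha}(f(t);x)\right|\le\alpha\,{L_{n,m}^{c}}(|f(t)|;x)$ and then uses monotonicity of the positive operator ${L_{n,m}^{c}}$ and Lemma~\ref{lem-1}(i) to pass to $\|f\|$.

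I do not expect a genuine obstacle here: the whole argument rests on the single inequality $a^{\alpha}-b^{\alpha}\le\alpha(a-b)$ for $0\le b\le a\le 1$ (equivalently, that the derivative of $u\mapsto u^{\alpha}$ is at most $\alpha$ on $[0,1]$) together with the telescoping structure of the $J$'s. The only point that calls for a moment's attention is that isolated $k=0$ term in (\ref{L:2}), since $Q_{n+mc,0}^{(\alpha)}$ involves ${J_{n+mc,0}}(x,c)=1$ rather than a genuine tail sum, but the same power-function estimate covers it.
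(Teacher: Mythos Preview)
Your proposal is correct and follows essentially the same route as the paper's proof: both hinge on the elementary inequality $a^{\alpha}-b^{\alpha}\le\alpha(a-b)$ for $0\le b\le a\le 1$, applied with $a=J_{n+mc,k}(x,c)$ and $b=J_{n+mc,k+1}(x,c)$ to obtain $Q_{n+mc,k}^{(\alpha)}(x;c)\le\alpha\,p_{n+mc,k}(x,c)$, after which the bound on $F_{n,m}^{c,\alpha}$ by $\alpha L_{n,m}^{c}$ is immediate. Your write-up is in fact slightly more detailed than the paper's (the mean-value-theorem justification and the explicit treatment of the $k=0$ term), but the argument is the same.
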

\begin{proof}
For $0\le c\le d\le1$, $\alpha\ge 1$, using the inequality 
\[\left| {{c^\alpha } - {d^\alpha }} \right| \le \alpha \left| {c - d} \right|,\]
from the definition of $Q_{n + mc,k}^{(\alpha )}(x;c)$, for all $k \in \Bbb N \cup \left\{ 0\right\}$, we get 
\begin{align*}
0 &< {\left( {{J_{n + mc,k}}(x,c)} \right)^\alpha } - {\left( {{J_{n + mc,k + 1}}(x,c)} \right)^\alpha }\\
& \le \alpha \left( {{J_{n + mc,k}}(x,c) - {J_{n + mc,k + 1}}(x,c)} \right)\\
&= \alpha {p_{n + mc}}(x,c).
\end{align*}
Hence
\[\left| {F_{n,m}^{c,\alpha}\left( {f(t);x} \right)} \right| \le \alpha {L_{n,m}^{c}}\left( {\left\| f \right\|;x} \right).\]
\end{proof}
\section{Main Results}
For $x>0$, $t\ge 0$ and $0<\gamma \le 1$, we can see in \"{O}zarslan et al. \cite{OD:10}, the Lipschitz type space is defined as:
\[Li{p_M}(\gamma ): = \left\{ {f \in C[0,\infty ):\left| {f(t) - f(x)} \right| \le M\frac{{{{\left| {t - x} \right|}^\gamma }}}{{{{(t + x)}^{{\raise0.7ex\hbox{$\gamma $} \!\mathord{\left/
 {\vphantom {\gamma  2}}\right.\kern-\nulldelimiterspace}
\!\lower0.7ex\hbox{$2$}}}}}}} \right\}.\]
Now we estimate the rate of convergence of the function $f \in Li{p_M}(\gamma )$ by the operators $F_{n,m}^{c,\alpha }\left( {.;x} \right)$. 
\begin{theorem}\label{thm-1}
For $f \in Li{p_M}(\gamma )$ and $\gamma \in (0,1]$. Then for $x>0$, we have
 	\[\left| {F_{n,m}^{c,\alpha}\left( {f(t);x} \right) - f(x)} \right| \le \alpha M{\left( {\frac{{\delta _{n,m}^c(x)}}{x}} \right)^{{\raise0.7ex\hbox{$\gamma $} \!\mathord{\left/
 {\vphantom {\gamma  2}}\right.\kern-\nulldelimiterspace}
\!\lower0.7ex\hbox{$2$}}}},\]
where $\delta _{n,m}^c(x) = \sqrt {\frac{{2x(1 + cx)}}{{(n + (m - 1)c)}}}$. 	
\end{theorem}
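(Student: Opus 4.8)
The plan is to exploit three facts already available: that $F_{n,m}^{c,\alpha}(\cdot\,;x)$ is a positive linear operator which reproduces constants (Remark \ref{rem-2}); that it is dominated by $\alpha$ times the operator $L_{n,m}^{c}$ through the kernel inequality used in Lemma \ref{lem-3}; and that the second central moment $\mu_{n,2}^{c}(x)$ of $L_{n,m}^{c}$ equals $\dfrac{2x(1+cx)}{n+(m-1)c}$ by Lemma \ref{lm1}. On top of these, the only analytic tool needed is H\"older's inequality for a positive linear functional.

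First, since $F_{n,m}^{c,\alpha}(1;x)=1$, one writes $F_{n,m}^{c,\alpha}(f(t);x)-f(x)=F_{n,m}^{c,\alpha}\big(f(t)-f(x);x\big)$ and uses positivity to get
\[\left|F_{n,m}^{c,\alpha}(f(t);x)-f(x)\right|\le F_{n,m}^{c,\alpha}\big(|f(t)-f(x)|;x\big).\]
Then I would apply the pointwise estimate $0<Q_{n+mc,k}^{(\alpha)}(x;c)\le\alpha\,p_{n+mc,k}(x;c)$ from the proof of Lemma \ref{lem-3} termwise inside the defining series and integral of $F_{n,m}^{c,\alpha}$; because the integrand $|f(t)-f(x)|$ is nonnegative this is legitimate even though it is not bounded, and it gives
\[\left|F_{n,m}^{c,\alpha}(f(t);x)-f(x)\right|\le\alpha\,L_{n,m}^{c}\big(|f(t)-f(x)|;x\big).\]
Inserting the Lipschitz bound $|f(t)-f(x)|\le M\,|t-x|^{\gamma}(t+x)^{-\gamma/2}$ and estimating $(t+x)^{-\gamma/2}\le x^{-\gamma/2}$ (valid for $t\ge0$ and $x>0$) reduces matters to
\[\left|F_{n,m}^{c,\alpha}(f(t);x)-f(x)\right|\le\frac{\alpha M}{x^{\gamma/2}}\,L_{n,m}^{c}\big(|t-x|^{\gamma};x\big).\]

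The key step is H\"older's inequality applied to the positive linear functional $L_{n,m}^{c}(\cdot\,;x)$ with the conjugate exponents $p=2/\gamma$ and $q=2/(2-\gamma)$; since $L_{n,m}^{c}(e_0;x)=1$ by Lemma \ref{lem-1}(i), this yields
\[L_{n,m}^{c}\big(|t-x|^{\gamma};x\big)\le\Big(L_{n,m}^{c}\big((t-x)^{2};x\big)\Big)^{\gamma/2}=\big(\mu_{n,2}^{c}(x)\big)^{\gamma/2}.\]
Substituting the value of $\mu_{n,2}^{c}(x)$ from Lemma \ref{lm1} and recalling the definition of $\delta_{n,m}^{c}(x)$, a short simplification gives
\[\left|F_{n,m}^{c,\alpha}(f(t);x)-f(x)\right|\le\alpha M\left(\frac{\delta_{n,m}^{c}(x)}{x}\right)^{\gamma/2},\]
as claimed.

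I do not anticipate a serious obstacle: once the two ingredients (the kernel domination and the moment formula) are recorded, the argument is a short chain of inequalities. The two places that warrant a line of justification are (a) that Lemma \ref{lem-3}, although phrased for $f\in C_B[0,\infty)$, is really the kernel inequality $Q_{n+mc,k}^{(\alpha)}\le\alpha p_{n+mc,k}$ and therefore also transfers the estimate for the nonnegative but unbounded function $|f(t)-f(x)|$; and (b) that H\"older's inequality is genuinely available here because $L_{n,m}^{c}(\cdot\,;x)$ is positive and has total mass $1$ (Lemma \ref{lem-1}(i)).
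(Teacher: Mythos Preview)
Your proof is correct and follows essentially the same route as the paper's own argument: positivity and $F_{n,m}^{c,\alpha}(1;x)=1$ to move to $|f(t)-f(x)|$, the kernel domination $Q_{n+mc,k}^{(\alpha)}\le\alpha\,p_{n+mc,k}$ to pass to $\alpha L_{n,m}^{c}$, the Lipschitz bound together with $(t+x)^{-\gamma/2}\le x^{-\gamma/2}$, and finally H\"older with exponents $p=2/\gamma$, $q=2/(2-\gamma)$ against the second central moment. Your explicit remark that the kernel inequality (rather than Lemma~\ref{lem-3} as stated for $C_B$) is what justifies the passage for the unbounded nonnegative integrand $|f(t)-f(x)|$ is a point the paper leaves implicit.
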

\begin{proof}
Using Lemma \ref{lem-3}, we get 
\begin{align}\label{l1}
\nonumber \left| {F_{n,m}^{c,\alpha}\left( {f(t);x} \right) - f(x)} \right| &\le F_{n,m}^{c,\alpha}\left( {\left| {f(t) - f(x)} \right|;x} \right)\\
\nonumber &\le \alpha {L_{n,m}^{c}}\left( {\left| {f(t) - f(x)} \right|;x} \right)\\
\nonumber &\le \alpha M{L_{n,m}^{c}}\left( {\frac{{{{\left| {t - x} \right|}^\gamma }}}{{{{(t + x)}^{{\raise0.7ex\hbox{$\gamma $} \!\mathord{\left/
 {\vphantom {\gamma  2}}\right.\kern-\nulldelimiterspace}
\!\lower0.7ex\hbox{$2$}}}}}};x} \right)\\
&\le \frac{\alpha M}{{{x^{{\raise0.7ex\hbox{$\gamma $} \!\mathord{\left/
 {\vphantom {\gamma  2}}\right.\kern-\nulldelimiterspace}
\!\lower0.7ex\hbox{$2$}}}}}}{L_{n,m}^{c}}\left( {{{\left| {t - x} \right|}^\gamma };x} \right).	
\end{align}
Using H$\ddot{\text{o}}$lder's inequality by taking $p=2/\gamma$ and $q=2/(2-\gamma)$, we get 
\begin{align}\label{l2}
\nonumber {L_{n,m}^{c}}\left( {{{\left| {t - x} \right|}^\gamma };x} \right) &\le {\left\{ {L_{n,m}^c\left( {{{(t - x)}^2};x} \right)} \right\}^{\frac{\gamma }{2}}}.{\left\{ {L_{n,m}^c\left( {{1^{\frac{2}{{\left( {2 - \gamma } \right)}}}};x} \right)} \right\}^{\frac{{\left( {2 - \gamma } \right)}}{2}}}\\
&\le {\left\{ {L_{n,m}^c\left( {{{(t - x)}^2};x} \right)} \right\}^{\frac{\gamma }{2}}} = {\left( {\delta _{n,m}^c(x)} \right)^{\frac{\gamma }{2}}}.
\end{align}
From (\ref{l1}) and (\ref{l2}), we get
\[\left| {F_{n,m}^{c,\alpha }\left( {f(t);x} \right) - f(x)} \right| \leqslant \alpha M{\left( {\frac{{\delta _{n,m}^c(x)}}{x}} \right)^{\frac{\gamma }{2}}}.\]
Hence the proof.
\end{proof}

In our next theorem, we estimate the rate of convergence by using of Ditzian-Totik modulus of smoothness ${\omega _{{\phi ^\beta }}}(f,\delta )$ and Peetre $K-$functional ${K_{{\phi ^\beta }}}(f,\delta )$, $0\le \beta \le 1$. For $f\in C_B[0,\infty)$ and $\phi(x)=\sqrt{x(1+cx)}$, the Ditzian-Totik modulus of smoothness is defined as
\[{\omega _{{\phi ^\beta }}}(f,\delta ) = \mathop {\sup }\limits_{0 \le i \le \delta } \mathop {\sup }\limits_{x \pm \frac{{i{\phi ^\beta }(x)}}{2}\in [0,\infty)} \left| {f\left( {x + \frac{{i{\phi ^\beta }(x)}}{2}} \right) - f\left( {x - \frac{{i{\phi ^\beta }(x)}}{2}} \right)} \right|,\]
and the Peetre $K-$functional is defined as 
\[{\omega _{{\phi ^\beta }}}(f,\delta ) = \mathop {\inf }\limits_{g \in {W_\beta }} \left\{ {\left\| {f - g} \right\| - \delta \left\| {{\phi ^\beta }{g^{\prime}}} \right\|} \right\},\]
where $W_\beta$ is subspace of the space which is locally absolutely continuous functions $g$ on $[0,\infty)$, with the normed $\left\| {{\phi ^\beta }{g^{\prime}}} \right\|\le \infty$. In [\cite{DT:87}, Theorem 2.1.1], there exists a constant $C>0$ such that
\begin{equation}\label{l01}
{C^{ - 1}}{\omega _{{\phi ^\beta }}}(f,\delta ) \le {K_{{\phi ^\beta }}}(f,\delta ) \le C{\omega _{{\phi ^\beta }}}(f,\delta ).
\end{equation}
\begin{theorem}\label{thm-2}
For $f\in C_B[0,\infty)$ then, we have
	\[\left| {F_{n,m}^{c,\alpha}\left( {f(t);x} \right) - f(x)} \right| \le C{\omega _{{\phi ^\beta }}}\left( {f;\frac{{{\phi ^{1 - \beta }}(x)}}{{\sqrt n }}} \right),\]
	for sufficient large $n$ and $C$ is a positive constant independent from $f$ and $n$.
\begin{proof}
For $g\in W_\beta$, we get
\begin{equation}\label{e:01}
g(t) = g(x) + \int\limits_x^t {g^{\prime}} (u)du	.
\end{equation}
Applying $F_{n,m}^{c,\alpha }$ in (\ref{e:01}) and using H$\ddot{\text{o}}$lder's inequality then, we have
\begin{align}\label{l4}
\nonumber \left| {F_{n,m}^{c,\alpha}\left( {g(t);x} \right) - g(x)} \right| &
\le F_{n,m}^{c,\alpha }\left( {\int\limits_x^t {\left| {{g^{\prime}}} \right|} du;x} \right)\\
\nonumber &\le \left\| {{\phi ^\beta }{g^\prime }} \right\|F_{n,m}^{c,\alpha}\left( {\left| {\int\limits_x^t {\frac{{du}}{{{\phi ^\beta }(u)}}} } \right|;x} \right)\\
&\le \left\| {{\phi ^\beta }{g^\prime }} \right\|F_{n,m}^{c,\alpha }\left( {{{\left| {t - x} \right|}^{1 - \beta }}{{\left| {\int\limits_x^t {\frac{{du}}{{\phi (u)}}} } \right|}^\beta };x} \right).
\end{align}
Let us take $A = \left| {\int\limits_x^t {\frac{{du}}{{\phi (u)}}} } \right|$ then, we get	
\begin{align}\label{l5}
\nonumber A &\le \left| {\int\limits_x^t {\frac{{du}}{{\sqrt u }}} } \right|\left| {\left( {\frac{1}{{\sqrt {1 + cx} }} + \frac{1}{{\sqrt {1 + ct} }}} \right)} \right|\\
\nonumber &\le 2\left| {\sqrt t  - \sqrt x } \right|\left( {\frac{1}{{\sqrt {1 + cx} }} + \frac{1}{{\sqrt {1 + ct} }}} \right)\\
\nonumber &\le 2\frac{{\left| {t - x} \right|}}{{\sqrt x  + \sqrt t }}\left( {\frac{1}{{\sqrt {1 + cx} }} + \frac{1}{{\sqrt {1 + ct} }}} \right)\\
&\le 2\frac{{\left| {t - x} \right|}}{{\sqrt x }}\left( {\frac{1}{{\sqrt {1 + cx} }} + \frac{1}{{\sqrt {1 + ct} }}} \right).
\end{align}
The inequality ${\left| {a + b} \right|^\beta } \le {\left| a \right|^\beta } + {\left| b \right|^\beta }$, $0 \le \beta \le 1$ then from (\ref{l5}), we get
\begin{equation}\label{l6}
{\left| {\int\limits_x^t {\frac{{du}}{{\phi (u)}}} } \right|^\beta } \le 2^\beta\frac{{{{\left| {t - x} \right|}^\beta }}}{{{x^{{\raise0.7ex\hbox{$\beta $} \!\mathord{\left/
 {\vphantom {\beta  2}}\right.\kern-\nulldelimiterspace}
\!\lower0.7ex\hbox{$2$}}}}}}\left( {\frac{1}{{{{(1 + cx)}^{{\raise0.7ex\hbox{$\beta $} \!\mathord{\left/
 {\vphantom {\beta  2}}\right.\kern-\nulldelimiterspace}
\!\lower0.7ex\hbox{$2$}}}}}} + \frac{1}{{{{(1 + ct)}^{{\raise0.7ex\hbox{$\beta $} \!\mathord{\left/
 {\vphantom {\beta  2}}\right.\kern-\nulldelimiterspace}
\!\lower0.7ex\hbox{$2$}}}}}}} \right).
\end{equation}
From (\ref{l4}), (\ref{l6}) and using Cauchy inequality then, we get
\begin{align}\label{ç}
\nonumber \left| {F_{n,m}^{c,\alpha}\left( {g(t);x} \right) - g(x)} \right| \le & \frac{{{2^\beta }\left\| {{\phi ^\beta }{g^\prime }} \right\|}}{{{x^{{\raise0.7ex\hbox{$\beta $} \!\mathord{\left/
 {\vphantom {\beta  2}}\right.\kern-\nulldelimiterspace}
\!\lower0.7ex\hbox{$2$}}}}}}F_{n,m}^{c,\alpha}\left( {\left| {t - x} \right|\left( {\frac{1}{{{{(1 + cx)}^{{\raise0.7ex\hbox{$\beta $} \!\mathord{\left/
 {\vphantom {\beta  2}}\right.\kern-\nulldelimiterspace}
\!\lower0.7ex\hbox{$2$}}}}}} + \frac{1}{{{{(1 + ct)}^{{\raise0.7ex\hbox{$\beta $} \!\mathord{\left/
 {\vphantom {\beta  2}}\right.\kern-\nulldelimiterspace}
\!\lower0.7ex\hbox{$2$}}}}}}} \right);x} \right)	\\
\nonumber \le &\frac{{{2^\beta }\left\| {{\phi ^\beta }{g^\prime }} \right\|}}{{{x^{{\raise0.7ex\hbox{$\beta $} \!\mathord{\left/
 {\vphantom {\beta  2}}\right.\kern-\nulldelimiterspace}
\!\lower0.7ex\hbox{$2$}}}}}}\left( {\frac{1}{{{{(1 + cx)}^{{\raise0.7ex\hbox{$\beta $} \!\mathord{\left/
 {\vphantom {\beta  2}}\right.\kern-\nulldelimiterspace}
\!\lower0.7ex\hbox{$2$}}}}}}} \right.{\left( {F_{n,c}^{(\alpha )}\left( {{{(t - x)}^2};x} \right)} \right)^{{\raise0.7ex\hbox{$1$} \!\mathord{\left/
 {\vphantom {1 2}}\right.\kern-\nulldelimiterspace}
\!\lower0.7ex\hbox{$2$}}}}\\
\nonumber &\left. { + {{\left( {F_{n,m}^{c,\alpha}\left( {{{(t - x)}^2};x} \right)} \right)}^{{\raise0.7ex\hbox{$1$} \!\mathord{\left/
 {\vphantom {1 2}}\right.\kern-\nulldelimiterspace}
\!\lower0.7ex\hbox{$2$}}}}.{{\left( {F_{n,m}^{c,\alpha}\left( {{{(1 + ct)}^{ - \beta }};x} \right)} \right)}^{{\raise0.7ex\hbox{$1$} \!\mathord{\left/
 {\vphantom {1 2}}\right.\kern-\nulldelimiterspace}
\!\lower0.7ex\hbox{$2$}}}}} \right).
\end{align}
If $n$ is sufficiently large then we get
\begin{equation}\label{l8}
{\left( {F_{n,m}^{c,\alpha}\left( {{{(t - x)}^2};x} \right)} \right)^{{\raise0.7ex\hbox{$1$} \!\mathord{\left/
 {\vphantom {1 2}}\right.\kern-\nulldelimiterspace}
\!\lower0.7ex\hbox{$2$}}}} \le \sqrt {\frac{{2\alpha }}{n}} \phi (x),
\end{equation}
where $\phi (x) = \sqrt {x(1 + cx)}$.\\
For each $x\in[0,\infty)$, $F_{n,m}^{c,\alpha }\left( {{{(1 + ct)}^{ - \beta }};x} \right) \to {(1 + cx)^{ - \beta }}$ as $n \to \infty$. 
Thus for $\epsilon>0$, there exist $n_0\in \mathbb{N}$ such that
$$F_{n,m}^{c,\alpha }\left( {{{(1 + ct)}^{ - \beta }};x} \right) \le {(1 + cx)^{ - \beta }} +\varepsilon,\;\;\;\text{for all}\;n\ge n_0$$
By choosing $\varepsilon=(1+cx)^{-\beta}$ then, we get
\begin{equation}\label{l9}
F_{n,m}^{c,\alpha }\left( {{{(1 + ct)}^{ - \beta }};x} \right) \le 2{(1 + cx)^{ - \beta }},\;\;\;\text{for all}\; n \ge n_0.	
\end{equation}
From (\ref{l7}) to (\ref{l9}), we have
\begin{align}\label{l10}
\nonumber \left| {F_{n,m}^{c,\alpha }\left( {g(t);x} \right) - g(x)} \right| &\le {2^\beta }\left\| {{\phi ^\beta }g'} \right\|\sqrt {\frac{{2\alpha }}{n}} \phi (x)\left( {{\phi ^{ - \beta }}(x) + \sqrt 2 {x^{ - \frac{\beta }{2}}}{{(1 + cx)}^{ - \frac{\beta }{2}}}} \right)\\
&\le {2^{\beta  + \frac{1}{2}}}(1 + \sqrt 2 )\left\| {{\phi ^\beta }{g' }} \right\|\sqrt {\frac{\alpha }{n}} {\phi ^{1 - \beta }}(x).
\end{align}
We may write 
\begin{align}\label{l11}
\nonumber \left| {F_{n,m}^{c,\alpha }\left( {f(t);x} \right) - f(x)} \right| &\le \left| {F_{n,m}^{c,\alpha }\left( {f(t) - g(t);x} \right)} \right|\\
&\hspace{.5cm}\nonumber + \left| {F_{n,m}^{c,\alpha }\left( {g(t);x} \right) - g(x)} \right| + \left| {g(x) - f(x)} \right|\\
&\le 2\left\| {f - g} \right\| + \left| {F_{n,m}^{c,\alpha}\left( {g(t);x} \right) - g(x)} \right| .	
 \end{align}
From (\ref{l10}) to (\ref{l11}) and for sufficiently large $n$, we get
\begin{align}\label{l12}
\nonumber \left| {F_{n,m}^{c,\alpha }\left( {f(t);x} \right) - f(x)} \right| &\le 2\left\| {f - g} \right\| + {2^{\beta  + \frac{1}{2}}}(1 + \sqrt 2 )\sqrt {\frac{\alpha }{n}} {\phi ^{1 - \beta }}(x)\left\| {{\phi ^\beta }{g'}} \right\|\\
\nonumber &\le C_1\left\{ {\left\| {f - g} \right\| + \frac{{{\phi ^{1 - \beta }}(x)}}{{\sqrt n }}\left\| {{\phi ^\beta }{g'}} \right\|} \right\}	\\
&\le C{K_{{\phi ^\beta }}}\left( {f,\frac{{{\phi ^{1 - \beta }}(x)}}{{\sqrt n }}} \right),
\end{align}
where $C_1=max(2,{2^{\beta  + \frac{1}{2}}}(1 + \sqrt 2 )\sqrt \alpha)$ and $C=2C_1$. From (\ref{l01}) and (\ref{l12}), we get the required result.
\end{proof}
\end{theorem}

\section{Weighted Approximation}
For the estimation of the rate of convergence of the function $f \in C_2[0,\infty)$ by using the weighted modulus of continuity, which was introduced by Ispir and Y\"{u}ksal \cite{IN:2007} as follows:
\begin{equation}\label{we1}
	\Omega (f;\delta ) = \mathop {\sup }\limits_{x \in [0,\infty ),0 < \beta  < \delta } \frac{{f(x + \beta ) - f(x)}}{{1 + {{(x + \beta )}^2}}}.
\end{equation}
Many authors have already discussed weighted modulus of continuity for various linear positive operators. For more information (see \cite{AABK:18}, \cite{Deo:2007}, \cite{DPD:17}).\\

There are several properties of weighted modulus of continuity $\Omega (.;\delta )$ which are stated in following Lemma.
\begin{lem}\label{lmw1}
\cite{IN:2007} For $f\in C_2[0,\infty)$ the following properties hold:
\begin{enumerate}
\item[(i)] $\Omega (f;\delta)$ is monotonically increasing in $\delta ;$
\item[(ii)] $\mathop {\lim }\limits_{\delta  \to {0^ + }} \Omega (f;\delta ) = 0;$
\item[(iii)] For each $r \in \Bbb{N}$, $\Omega (f;r\delta ) \le r\Omega (f;\delta );$
\item[(iv)] For each $\lambda \in [0,\infty)$, $\Omega (f;\lambda \delta ) \le (\lambda  + 1)\Omega (f;\delta )$.
\end{enumerate}	
\end{lem}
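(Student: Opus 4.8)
The plan is to verify the four assertions directly from the definition \eqref{we1}; three of them are bookkeeping and only (ii) requires genuine work, so I would organize the proof to dispose of (i), (iii), (iv) quickly and then spend the effort on (ii).

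Property (i) is immediate, since for $0<\delta_1\le\delta_2$ the supremum defining $\Omega(f;\delta_1)$ is taken over a subset of the index set used for $\Omega(f;\delta_2)$. For (iii) I would fix $r\in\N$ and $0<\beta<r\delta$, set $x_j=x+\tfrac{j\beta}{r}$ for $0\le j\le r$, and use the telescoping identity $f(x+\beta)-f(x)=\sum_{j=0}^{r-1}\big[f(x_{j+1})-f(x_j)\big]$. Because $x_{j+1}\le x+\beta$ we have $1+(x+\beta)^2\ge1+x_{j+1}^2$, so dividing by $1+(x+\beta)^2$ and applying the triangle inequality bounds each summand by $\tfrac{|f(x_j+\beta/r)-f(x_j)|}{1+(x_j+\beta/r)^2}\le\Omega(f;\delta)$ (here $\beta/r<\delta$ and $x_j\ge0$); summing the $r$ terms and taking suprema gives $\Omega(f;r\delta)\le r\,\Omega(f;\delta)$. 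Property (iv) then follows by combining (i) and (iii): for $\lambda\ge0$ choose $r=\lceil\lambda\rceil$ (or $r=1$ when $\lambda=0$), so $\lambda\le r\le\lambda+1$ and $\Omega(f;\lambda\delta)\le\Omega(f;r\delta)\le r\,\Omega(f;\delta)\le(\lambda+1)\,\Omega(f;\delta)$.

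The main obstacle is (ii), because continuity of $f$ alone does not control the quotient for large $x$; one must invoke the defining property of the weight space $C_2[0,\infty)$, namely that $f$ is continuous with $\ell:=\lim_{x\to\infty}f(x)/(1+x^2)$ finite. I would split the supremum over $x$. On a bounded part $0\le x\le A$, uniform continuity of $f$ on $[0,A+1]$ makes $|f(x+\beta)-f(x)|$ uniformly small for $\beta$ small, and dividing by $1+(x+\beta)^2\ge1$ preserves the bound. On the part $x\ge A$, $0<\beta<1$, writing $g(x):=f(x)/(1+x^2)$ one has
\[
\frac{f(x+\beta)-f(x)}{1+(x+\beta)^2}=\big(g(x+\beta)-\ell\big)-\big(g(x)-\ell\big)\frac{1+x^2}{1+(x+\beta)^2}+\ell\left(1-\frac{1+x^2}{1+(x+\beta)^2}\right);
\]
the first two terms are small for $A$ large by the definition of $\ell$ (and $0\le\tfrac{1+x^2}{1+(x+\beta)^2}\le1$), while $1-\tfrac{1+x^2}{1+(x+\beta)^2}=\tfrac{2x\beta+\beta^2}{1+(x+\beta)^2}\le\tfrac{2x+1}{1+x^2}\to0$ as $x\to\infty$. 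Enlarging $A$ makes all three contributions arbitrarily small; together with the bounded part this forces $\Omega(f;\delta)\to0$ as $\delta\to0^+$.

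In short, (ii) is the only delicate step, for the growth reason just indicated; the remaining items are routine once \eqref{we1} is unwound, and since the lemma is quoted from \cite{IN:2007} one could alternatively simply refer the reader there.
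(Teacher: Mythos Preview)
Your proof is correct and in fact goes well beyond what the paper does: the paper supplies no argument at all for this lemma, merely stating it with the citation \cite{IN:2007}. Your self-contained verification of (i), (iii), (iv) via the obvious monotonicity, telescoping, and ceiling arguments is the standard route, and your treatment of (ii)---splitting into a compact range handled by uniform continuity and a tail controlled through the decomposition in terms of $g(x)=f(x)/(1+x^2)$ and its limit $\ell$---is exactly the kind of argument one finds in the source. One small remark: the validity of (ii) genuinely uses that $f(x)/(1+x^2)$ has a finite limit at infinity (not merely that it is bounded), which is the customary meaning of $C_2[0,\infty)$ in this literature; you rely on this correctly, though the paper never spells out the definition. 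Since the authors themselves are content to cite \cite{IN:2007}, your closing remark that one could simply refer the reader there matches the paper's approach verbatim.
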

\begin{theorem}
Let $f\in C_2[0,\infty)$, $\alpha>0$, for fixed $m$ and sufficiently large $n$ then, we have
\[\mathop {\sup }\limits_{x \in [0,\infty )} \frac{{\left| {F_{n,m}^{c,\alpha }(f;x) - f(x)} \right|}}{{{{(1 + x)}^{5/2}}}} \leqslant C\Omega \left( {f;\frac{1}{{\sqrt n }}} \right),\]
where $C$ is positive constant depends on $n$ and $f$.
\begin{proof}
By the definition of the weighted modulus of continuity and Lemma \ref{lmw1}, we have
\begin{align}\label{we2}
\nonumber\left| {f(t) - f(x)} \right| &\le \left( {1 + {{\left( {x + \left| {t - x} \right|} \right)}^2}} \right)\Omega \left( {f;\left| {t - x} \right|} \right)\\
&\le 2(1 + {x^2})\left( {1 + {{(t - x)}^2}} \right)\left( {1 + \frac{{\left| {t - x} \right|}}{\delta }} \right)\Omega (f;\delta).
\end{align} 
Applying $F_{n,m}^{c,\alpha }(.;x)$ on both side of (\ref{we2}), we can write 
\begin{align}\label{we3}
\nonumber \left| {F_{n,m}^{c,\alpha }(f;x) - f(x)} \right| &\le 
  \left[ {1 + F_{n,m}^{c,\alpha }({{(t - x)}^2};x)} \right. \hfill \\
  &\hspace{.5cm}\left. { + F_{n,m}^{c,\alpha }\left( {(1 + {{(t - x)}^2})\frac{{\left| {t - x} \right|}}{\delta };x} \right)} \right] .
\end{align}	
From Remark \ref{rem-1}, and using Cauchy-Schwarz inequality in the last term of (\ref{we3}), we have
\begin{align}\label{we4}
\nonumber F_{n,m}^{c,\alpha }\left( {(1 + {{(t - x)}^2})\frac{{\left| {t - x} \right|}}{\delta };x} \right) \le &{\rm{ }}\frac{1}{\delta }{\left( {\alpha \mu _{n,2}^c(x)} \right)^{1/2}}\\
  &+ \frac{1}{\delta }{\left( {\alpha \mu _{n,4}^c(x)} \right)^{1/2}}{\left( {\alpha \mu _{n,2}^c(x)} \right)^{1/2}}.\end{align}
Combining the estimate from (\ref{we2}) to (\ref{we4}) and taking
 $C = 2(1 + \sqrt {\alpha C}  + 2C)$ and $\delta  = \frac{1}{{\sqrt n }}$ then we get the required result.
\end{proof}
\end{theorem}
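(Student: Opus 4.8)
The plan is to carry out the standard weighted‑estimate argument built around the modulus $\Omega(f;\cdot)$. First I would record the elementary pointwise majorant: from the definition $(\ref{we1})$ of $\Omega$ together with part (iv) of Lemma \ref{lmw1}, for every $\delta>0$ and all $t,x\in[0,\infty)$ one has
\[
|f(t)-f(x)|\le 2(1+x^2)\bigl(1+(t-x)^2\bigr)\Bigl(1+\tfrac{|t-x|}{\delta}\Bigr)\Omega(f;\delta).
\]
This reduces the whole estimate to controlling the $F_{n,m}^{c,\alpha}$‑images of the functions $1$, $(t-x)^2$, $|t-x|$ and $|t-x|^3$.

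Next I would apply $F_{n,m}^{c,\alpha}(\cdot;x)$ to the inequality above, using linearity and positivity together with the normalization $F_{n,m}^{c,\alpha}(1;x)=1$ from Remark \ref{rem-2}. For the polynomial pieces Lemma \ref{lem-3} gives $F_{n,m}^{c,\alpha}(g;x)\le\alpha L_{n,m}^{c}(|g|;x)$, so the second and fourth central moments of $F_{n,m}^{c,\alpha}$ are dominated by $\alpha\mu_{n,2}^{c}(x)$ and $\alpha\mu_{n,4}^{c}(x)$, which by Remark \ref{rem-1} are $\le C\alpha\,x(1+cx)/n$ and $\le C\alpha\,(x(1+cx))^2/n^2$ respectively for $n$ large. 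The one genuinely new term, $F_{n,m}^{c,\alpha}\left(\bigl(1+(t-x)^2\bigr)|t-x|/\delta;x\right)$, I would split as $\tfrac1\delta F_{n,m}^{c,\alpha}(|t-x|;x)+\tfrac1\delta F_{n,m}^{c,\alpha}(|t-x|^3;x)$ and bound each summand by the Cauchy--Schwarz inequality, getting $F_{n,m}^{c,\alpha}(|t-x|;x)\le(\alpha\mu_{n,2}^{c}(x))^{1/2}$ and $F_{n,m}^{c,\alpha}(|t-x|^3;x)\le(\alpha\mu_{n,4}^{c}(x))^{1/2}(\alpha\mu_{n,2}^{c}(x))^{1/2}$.

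Finally I would set $\delta=1/\sqrt n$, assemble the three pieces, factor out $\Omega(f;1/\sqrt n)$, and estimate what is left. Every surviving $x$‑dependent factor is a product of powers of $x$, of $(1+cx)$ and of $(1+x^2)$ over powers of $n$; using $1+cx\le(1+c)(1+x)$ and $1+x^2\le(1+x)^2$ and dividing through by the weight $(1+x)^{5/2}$, one arranges that the quantity is bounded uniformly in $x\in[0,\infty)$ for $n$ sufficiently large, and collecting all numerical and $\alpha$‑dependent constants into a single $C$ yields the claim.

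I expect the last step -- matching the polynomial growth coming from $(1+x^2)$ times the square roots of the central moments against the fixed weight $(1+x)^{5/2}$, so that the bound is genuinely independent of $x$ -- to be the only place that needs real care; applying the operator and Cauchy--Schwarz is routine once Remarks \ref{rem-1} and \ref{rem-2} and Lemma \ref{lem-3} are in hand.
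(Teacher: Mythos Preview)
Your proposal is correct and follows essentially the same route as the paper's proof: the same pointwise majorant from the weighted modulus (equation~(\ref{we2})), application of $F_{n,m}^{c,\alpha}$, the same Cauchy--Schwarz splitting of the cross term into $(\alpha\mu_{n,2}^c)^{1/2}$ and $(\alpha\mu_{n,4}^c)^{1/2}(\alpha\mu_{n,2}^c)^{1/2}$ via Lemma~\ref{lem-3} and Remark~\ref{rem-1}, and the choice $\delta=1/\sqrt n$. Your final paragraph actually makes explicit the $x$-growth bookkeeping that the paper leaves implicit in its concluding sentence.
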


\section{function of bounded variation}
In this section, we study the rate of convergence of the B\'ezier variant of Gupta-Srivastava operators \eqref{L:2} in the class $DBV[0, \infty)$, the class of all absolutely continuous functions $f$ defined on $[0, \infty)$ having a derivative coinciding a.e. with a function of bounded variation on $[0, \infty)$. It can be seen that for $f \in DBV[0, \infty)$, we can write
\[f\left( x \right) = \int\limits_0^x {g\left( t \right)dt + f\left( 0 \right)},\]
where $g(t)$ is a function of bounded variation on each finite subinterval of $[0, \infty)$.
The operators $(\ref{L:2})$ can be rewritten in the following form:
\begin{equation}\label{E:3}
F_{n,m}^{c,\alpha}\left( {f(t);x} \right) = \int\limits_0^\infty  {M_{n,m,c}^{(\alpha )}(x,t)f(t)dt}, 
\end{equation}
where
\[M_{n,m,c}^{(\alpha )}(x,t) = \left\{ {n + (m + 1)c} \right\}\sum\limits_{k = 1}^\infty  {Q_{n + mc,k}^{(\alpha )}} (x;c){p_{n + (m + 2)c,k}}(t,c) + Q_{n + mc,0}^{(\alpha )}(x;c)\delta (t),\]
where $\delta(t)$ is Dirac delta function.
\begin{lem}\label{lem-4}
For a fixed $x\in [0,\infty)$ and $n$ is sufficient large then, we have
\begin{enumerate}
\item[(i)]$\zeta _{n,c}^{(\alpha )}(x;y) = \int\limits_0^y {M_{n,m,c}^{(\alpha )}(x;t)dt \le \frac{{2\alpha x(1 + cx)}}{{n{{(x - y)}^2}}}},\hspace{.2cm}0\le y \le x;$ 
\item[(ii)] $1 - \zeta _{n,c}^{(\alpha )}(x;z) = \int\limits_z^{\infty} {M_{n,m,c}^{(\alpha )}(x;t)dt \le \frac{{2\alpha x(1 + cx)}}{{n{{(z - x)}^2}}}},\hspace{.2cm}x\le z \le \infty.$	
\end{enumerate}
\end{lem}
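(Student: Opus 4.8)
The plan is to run the standard Chebyshev-type argument used for functions of bounded variation, which converts the second-moment estimate of Remark~\ref{rem-1} into a bound on the truncated mass of the kernel $M_{n,m,c}^{(\alpha)}(x,\cdot)$.

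First I would record the moment estimate that drives everything. The proof of Lemma~\ref{lem-3} in fact produces the pointwise inequality $0< Q_{n+mc,k}^{(\alpha)}(x;c)\le \alpha\, p_{n+mc,k}(x,c)$ for every $k$, so $F_{n,m}^{c,\alpha}(g;x)\le \alpha\, L_{n,m}^{c}(g;x)$ for every nonnegative $g$. Taking $g(t)=(t-x)^2$ and invoking Lemma~\ref{lm1} (equivalently Remark~\ref{rem-1}), for sufficiently large $n$ we get
\[
\int_0^\infty (t-x)^2\, M_{n,m,c}^{(\alpha)}(x,t)\,dt = F_{n,m}^{c,\alpha}\big((t-x)^2;x\big)\le \alpha\,\mu_{n,2}^{c}(x)\le \frac{2\alpha x(1+cx)}{n}.
\]
Note that the left-hand integral automatically accounts for the Dirac term $Q_{n+mc,0}^{(\alpha)}(x;c)\delta(t)$ in $M_{n,m,c}^{(\alpha)}$, which only adds the nonnegative contribution $Q_{n+mc,0}^{(\alpha)}(x;c)x^2$.

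For part (i), fix $0\le y\le x$ (the bound being trivial when $y=x$). On $[0,y]$ one has $0\le t\le y<x$, hence $x-t\ge x-y>0$ and therefore $1\le (x-t)^2/(x-y)^2$. Since $M_{n,m,c}^{(\alpha)}(x,\cdot)\ge 0$,
\[
\zeta_{n,c}^{(\alpha)}(x;y)=\int_0^y M_{n,m,c}^{(\alpha)}(x,t)\,dt\le \int_0^y \frac{(x-t)^2}{(x-y)^2}\,M_{n,m,c}^{(\alpha)}(x,t)\,dt\le \frac{1}{(x-y)^2}\int_0^\infty (x-t)^2\, M_{n,m,c}^{(\alpha)}(x,t)\,dt,
\]
and the displayed moment estimate closes the bound by $\dfrac{2\alpha x(1+cx)}{n(x-y)^2}$. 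For part (ii), fix $x\le z$. By Remark~\ref{rem-2}, $\int_0^\infty M_{n,m,c}^{(\alpha)}(x,t)\,dt = F_{n,m}^{c,\alpha}(1;x)=1$, so $1-\zeta_{n,c}^{(\alpha)}(x;z)=\int_z^\infty M_{n,m,c}^{(\alpha)}(x,t)\,dt$; on $[z,\infty)$ one has $t-x\ge z-x>0$, hence $1\le (t-x)^2/(z-x)^2$, and the same two steps give $1-\zeta_{n,c}^{(\alpha)}(x;z)\le \dfrac{1}{(z-x)^2}F_{n,m}^{c,\alpha}\big((t-x)^2;x\big)\le \dfrac{2\alpha x(1+cx)}{n(z-x)^2}$.

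There is no substantial obstacle here; the argument is essentially a two-line Chebyshev inequality once the second moment is controlled. The only points requiring care are (a) the Dirac mass at $t=0$, which is harmless since it enters with a nonnegative coefficient and the comparison $x^2\ge (x-y)^2$ still holds at $t=0$ when $0\le y\le x$, and (b) producing the constant exactly $2\alpha$, which is what forces the ``sufficiently large $n$'' hypothesis, so that $\mu_{n,2}^{c}(x)=\frac{2x(1+cx)}{n+(m-1)c}$ may be replaced by $\frac{2x(1+cx)}{n}$.
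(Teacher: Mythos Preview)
Your proposal is correct and follows essentially the same approach as the paper: insert the factor $((x-t)/(x-y))^2\ge 1$ on $[0,y]$, pass from $F_{n,m}^{c,\alpha}$ to $\alpha L_{n,m}^{c}$ via Lemma~\ref{lem-3}, and apply the second-moment bound of Remark~\ref{rem-1}. Your write-up is in fact more careful than the paper's (you handle the Dirac term, justify the constant~$2\alpha$ via ``sufficiently large~$n$'', and spell out part~(ii)), but the underlying argument is identical.
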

\begin{proof}
From (\ref{E:3}), and using Remark \ref{rem-1} then, we have	
\begin{align*}
\zeta _{n,c}^{(\alpha )}(x;y) &\le \int\limits_0^y {M_{n,m,c}^{(\alpha )}(x;t){{\left( {\frac{{x - t}}{{x - y}}} \right)}^2}dt}\\
&\le \frac{\alpha }{{{{(x - y)}^2}}}{L_{n,c}}\left( {{{(e_1 - x)}^2};x} \right)\\
& \le \frac{{2\alpha x(1 + cx)}}{{n{{(x - y)}^2}}}.	
\end{align*}
We can prove the second part of Lemma in same way.
\end{proof}
\begin{theorem}\label{thm-3}
Let $f\in DBV[0,\infty)$ then for every $x\in[0,\infty)$ and $n$ is sufficiently large, we have 
\begin{align*}
\left| {F_{n,m}^{c,\alpha}\left( {f;x} \right) - f(x)} \right| \le & {\rm{ }}\frac{1}{{\alpha  + 1}}\left| {{f^\prime }(x + ) + \alpha {f^\prime }(x - )} \right|\sqrt {\frac{{2\alpha x(1 + cx)}}{n}}\\
&+ \frac{\alpha }{{\alpha  + 1}}\left| {{f^\prime }(x + ) - {f^\prime }(x - )} \right|\sqrt {\frac{{2\alpha x(1 + cx)}}{n}}\\
&+\frac{{2\alpha (1 + cx)}}{n}\sum\limits_{k = 1}^{\left[ {\sqrt n } \right]} {\mathop V\limits_{x - \frac{x}{k}}^x } f_x^{\prime}+\frac{x}{{\sqrt n }}\mathop V\limits_{x - \frac{x}{{\sqrt n }}}^x f_x^{\prime}\\
& +\frac{{2\alpha (1 + cx)}}{{nx}}\left| {f(2x) - f(x) - xf(x + )} \right|\\
\nonumber &+ \frac{{2\alpha x(1 + cx)}}{n}\sum\limits_{k = 1}^{\left[ {\sqrt n } \right]} {\mathop {{\rm{ }}V}\limits_x^{x + \frac{x}{k}} f_x^\prime }  + \frac{x}{{\sqrt n }}\mathop {{\rm{ }}V}\limits_x^{x + \frac{x}{{\sqrt n }}} (f_x^\prime )\\
&+M(\gamma ,r,x) + \frac{{2\alpha (1 + cx)}}{{nx}}\left| {f(x)} \right| + \sqrt {\frac{{2\alpha x(1 + cx)}}{n}} \left| {f(x + )} \right|.
\end{align*}
where $\mathop V\limits_a^b f(x)$ denotes the total variation of $f$ on $[a, b]$, ${f_x}$ is an auxiliary operator given by
\begin{equation}\label{v1}
{f_x}\left( t \right) = \left\{ \begin{array}{l}
f\left( t \right) - f\left( {x - } \right),\;0 \le t < x\\
0,\;\;\;\;\;\;\;\;\;\;\;\;\;\;\;\;\;\;\;\;\;\;t = x\\
f\left( t \right) - f\left( {x + } \right),\;x < t < \infty
\end{array} \right..
\end{equation}
\end{theorem}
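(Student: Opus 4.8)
The plan is to adapt the classical Bojanic--Cheng type argument, in the form used for B\'ezier-type operators, to the kernel representation (\ref{E:3}). Since $f\in DBV[0,\infty)$, I would start from $f(t)-f(x)=\int_x^t f'(u)\,du$ and decompose $f'$ at the point $x$ into four pieces: a B\'ezier-weighted mean value $\frac{1}{\alpha+1}\bigl(f'(x+)+\alpha f'(x-)\bigr)$, the derivative $f_x'$ of the auxiliary function (\ref{v1}) (which has bounded variation on each finite subinterval and vanishes at $x$), a signed jump term proportional to $f'(x+)-f'(x-)$, and a point-mass correction supported on $\{x\}$. Applying $F_{n,m}^{c,\alpha}(\cdot;x)$, the point-mass term contributes nothing under the integral in (\ref{E:3}); integrating the mean-value term produces $\frac{1}{\alpha+1}\bigl(f'(x+)+\alpha f'(x-)\bigr)F_{n,m}^{c,\alpha}(t-x;x)$, and integrating the jump term produces $\frac{\alpha}{\alpha+1}\bigl(f'(x+)-f'(x-)\bigr)$ times a one-sided moment. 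The asymmetric weights $\frac{1}{\alpha+1}$ and $\frac{\alpha}{\alpha+1}$ emerge here from the structure of the B\'ezier basis: the sub-tails $\sum_{k\ge k_0}Q_{n+mc,k}^{(\alpha)}=J_{n+mc,k_0}^{\alpha}$ and $\sum_{k<k_0}Q_{n+mc,k}^{(\alpha)}=1-J_{n+mc,k_0}^{\alpha}$ satisfy $J_{n+mc,k_0}^{\alpha}\le J_{n+mc,k_0}$ and $1-J_{n+mc,k_0}^{\alpha}\le\alpha(1-J_{n+mc,k_0})$, the last being the inequality $|a^\alpha-b^\alpha|\le\alpha|a-b|$ from Lemma \ref{lem-3}. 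Both resulting terms are then bounded by the Cauchy--Schwarz estimate $F_{n,m}^{c,\alpha}(|t-x|;x)\le\bigl(F_{n,m}^{c,\alpha}((t-x)^2;x)\bigr)^{1/2}\le\sqrt{2\alpha x(1+cx)/n}$, obtained from Lemma \ref{lem-3} and Remark \ref{rem-1}; this gives the first two lines of the assertion.

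The core of the proof is the estimate of $T_n(x):=F_{n,m}^{c,\alpha}\bigl(\int_x^t f_x'(u)\,du;x\bigr)=\int_0^\infty M_{n,m,c}^{(\alpha)}(x,t)\bigl(\int_x^t f_x'(u)\,du\bigr)dt$. I would split the outer integral over the five ranges $[0,x-\tfrac{x}{\sqrt n}]$, $[x-\tfrac{x}{\sqrt n},x]$, $[x,x+\tfrac{x}{\sqrt n}]$, $[x+\tfrac{x}{\sqrt n},2x]$, $[2x,\infty)$. On the two ranges adjacent to $x$, since $f_x'(x)=0$ one has $|f_x'(u)|\le\mathop V\limits_{u}^{x}f_x'$ for $u<x$ and $|f_x'(u)|\le\mathop V\limits_{x}^{u}f_x'$ for $u>x$, so $\bigl|\int_x^t f_x'\bigr|\le|t-x|\,\mathop V\limits_{x-x/\sqrt n}^{x+x/\sqrt n}f_x'$, and combining $|t-x|\le x/\sqrt n$ with $\int M_{n,m,c}^{(\alpha)}(x,t)\,dt\le 1$ (Remark \ref{rem-2}) produces the $\frac{x}{\sqrt n}\mathop V\limits_{x-x/\sqrt n}^{x}f_x'$ and $\frac{x}{\sqrt n}\mathop V\limits_{x}^{x+x/\sqrt n}f_x'$ terms. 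On $[0,x-\tfrac{x}{\sqrt n}]$ and $[x+\tfrac{x}{\sqrt n},2x]$ I would perform Lebesgue--Stieltjes integration by parts, replacing $\int_0^y M_{n,m,c}^{(\alpha)}(x,t)\,dt$ by $\zeta_{n,c}^{(\alpha)}(x;y)$ and $\int_z^\infty$ by $1-\zeta_{n,c}^{(\alpha)}(x;z)$, and then invoke $\zeta_{n,c}^{(\alpha)}(x;y)\le\frac{2\alpha x(1+cx)}{n(x-y)^2}$ and $1-\zeta_{n,c}^{(\alpha)}(x;z)\le\frac{2\alpha x(1+cx)}{n(z-x)^2}$ from Lemma \ref{lem-4}; the substitution $y=x-x/k$ (respectively $z=x+x/k$, $k=1,\dots,[\sqrt n]$) turns the resulting integrals into $\frac{2\alpha(1+cx)}{n}\sum_{k=1}^{[\sqrt n]}\mathop V\limits_{x-x/k}^{x}f_x'$ and $\frac{2\alpha x(1+cx)}{n}\sum_{k=1}^{[\sqrt n]}\mathop V\limits_{x}^{x+x/k}f_x'$, with the boundary contributions absorbed into them. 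On $[2x,\infty)$ one uses $z-x\ge x$ to get $\int_{2x}^\infty M_{n,m,c}^{(\alpha)}(x,t)\,dt\le\frac{2\alpha(1+cx)}{nx}$; writing $\int_x^t f_x'(u)\,du=f(t)-f(x)-(t-x)f'(x+)$ and peeling off $f(2x)-f(x)-xf'(x+)$, $|f(x)|$, and the far-out growth of $f$ (controlled by the standing growth hypothesis on $f$, which furnishes $M(\gamma,r,x)$ together with $\sqrt{2\alpha x(1+cx)/n}\,|f(x+)|$) gives the remaining lines. Collecting the four blocks yields the stated inequality.

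The main obstacle I anticipate is the bookkeeping in the Stieltjes integration by parts on $[0,x-\tfrac{x}{\sqrt n}]$ and $[x+\tfrac{x}{\sqrt n},2x]$: one must identify the boundary terms correctly, check that after the substitution $y=x-x/k$ the quadratic decay of $\zeta_{n,c}^{(\alpha)}$ telescopes into a convergent sum over $k$ rather than a divergent tail, and verify that all the error contributions from the five-region split combine into exactly the displayed bound, with the B\'ezier factor $\alpha$ appearing only where claimed. By contrast, the near-$x$ estimates are immediate from $f_x'(x)=0$ and $F_{n,m}^{c,\alpha}(1;x)=1$, and the tail on $[2x,\infty)$ is routine once a growth condition on $f$ is fixed.
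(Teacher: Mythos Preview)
Your proposal is correct and follows essentially the same Bojanic--Cheng argument as the paper: the four-term decomposition of $f'$, Cauchy--Schwarz with Remark~\ref{rem-1} and Lemma~\ref{lem-3} for the mean-value and jump terms, Stieltjes integration by parts with Lemma~\ref{lem-4} and the substitution $u=x/(x-t)$ on the pieces away from $x$, and the growth-condition estimate on $[2x,\infty)$. The only differences are organizational---the paper integrates by parts on all of $[0,x]$ and on $[x,2x]$ first and then splits at $x\pm x/\sqrt n$, whereas you split into five ranges up front---and in the paper the weights $\tfrac{1}{\alpha+1}$, $\tfrac{\alpha}{\alpha+1}$ come from the purely algebraic identity
\[
f'(u)=f_x'(u)+\tfrac{1}{\alpha+1}\bigl(f'(x+)+\alpha f'(x-)\bigr)+\tfrac12\bigl(f'(x+)-f'(x-)\bigr)\Bigl(\mathrm{sgn}(u-x)+\tfrac{\alpha-1}{\alpha+1}\Bigr)+\delta_x(u)[\cdots],
\]
rather than from the B\'ezier tail inequalities $J^\alpha\le J$, $1-J^\alpha\le\alpha(1-J)$ that you cite.
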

\begin{proof}
From Remark $\ref{rem-2}$, $F_{n,m}^{c,\alpha}\left( {1;x} \right)=1$ and using the alternative form of the operators $(\ref{E:3})$ for each $x\in [0,\infty)$ then, we have
\begin{align}\label{v:2}
\nonumber F_{n,m}^{c,\alpha}\left( {f(t);x} \right) - f(x) &= \int\limits_0^\infty  {M_{n,m,c}^{(\alpha )}(x,t)\big(f(t) - f(x)\big) dt}\\
&= \int\limits_0^\infty  {M_{n,m,c}^{(\alpha )}(x,t)\left( {\int\limits_x^t {{f'}(u)du} } \right)dt} 	
\end{align}	
For each $f\in DBV[0,\infty)$ and from (\ref{v1}), we can write 
\begin{align}\label{v:3}
\nonumber {f'}(u)=& f_x'(u) + \frac{1}{{\alpha  + 1}}({f'}(x + ) + \alpha {f^{\prime}}(x - ))\\
\nonumber &+ \frac{1}{2}\left( {f'(x + ) + \alpha {f' }(x - )} \right)\left( {{\text{sgn}}(u - x) + \frac{{\alpha  - 1}}{{\alpha  + 1}}} \right)\\
&\times{\delta _x}(u)\left[ {{f' }(u) - \left( {{f'}(x + ) + {f'}(x - )} \right)} \right],
\end{align}
where 
\[{\delta _x}(u) = \left\{ \begin{array}{l}
1,\hspace{.5cm}u = x\\
0,\hspace{.5cm}u \ne x.
\end{array} \right.\]	
 From $(\ref{v:2})$ and $(\ref{v:3})$, we have
 \begin{align}\label{v:4}
\nonumber F_{n,m}^{c,\alpha}\left( {f(t);x} \right) - f(x) =& \int\limits_0^\infty  {M_{n,m,c}^{(\alpha )}(x,t)\int\limits_x^t {\left( {f_x'(u) + \frac{1}{{\alpha  + 1}}({f'}(x + ) + \alpha {\rm{ }}{f'}(x - ))} \right.} }\\
\nonumber &+ \frac{1}{2}({f'}(x + ) + \alpha {f'}(x - ))\left( {{\mathop{\rm sgn}} (u - x) + \frac{{\alpha  - 1}}{{\alpha  + 1}}} \right)\\
&\left. { \times {\delta _x}(u)[{f'}(u) - \frac{1}{2}({f'}(x + ) + {f'}(x - )]} \right)dudt. 	
 \end{align}
It is easy to say that 
\begin{equation}\label{v:5}
\int\limits_0^\infty  {M_{n,m,c}^{(\alpha )}(x,t)\int\limits_x^t {[{f' }(u) - \frac{1}{2}({f' }(x + ) + {f' }(x - )]{\delta _x}(u)} } dudt=0.
\end{equation}
Now
\begin{align}\label{v:6}
\nonumber {B_1}& = \int\limits_0^\infty  {M_{n,m,c}^{(\alpha )}(x,t)\int\limits_x^t {\frac{1}{{\alpha  + 1}}({f'}(x + ) + \alpha {f^{\prime}}(x - ))} } dudt.\\
\nonumber &= \frac{1}{{\alpha  + 1}}({f'}(x + ) + \alpha {f'}(x - ))\int\limits_0^\infty  {M_{n,m,c}^{(\alpha )}(x,t)(t - x)} dt	\\
&= \frac{1}{{\alpha  + 1}}({f'}(x + ) + \alpha {f'}(x - ))F_{n,c}^{(\alpha )}\left( {(t - x);x} \right),
\end{align}
and
\begin{align}\label{v:6}
\nonumber {B_2} &= \int\limits_0^\infty  {M_{n,m,c}^{(\alpha )}(x,t)\int\limits_x^t {\frac{1}{2}(f'(x + ) + \alpha {\rm{ }}f'(x - ))\left( {{\rm{sgn}}(u - x) + \frac{{\alpha  - 1}}{{\alpha  + 1}}} \right)} } dudt\\
\nonumber&= \frac{1}{2}({f'}(x + ) + \alpha {f' }(x - ))\left( { - \int\limits_0^x {M_{n,m,c}^{(\alpha )}(x,t)\int\limits_x^t {\left( {{\rm{sgn}}(u - x) + \frac{{\alpha  - 1}}{{\alpha  + 1}}} \right)} } dudt} \right.\\
\nonumber&\hspace{.8cm}+ \left. {\int\limits_x^\infty {M_{n,m,c}^{(\alpha )}(x,t)\int\limits_x^t {\left( {{\rm{sgn}}(u - x) + \frac{{\alpha  - 1}}{{\alpha  + 1}}} \right)} } } \right)\\
\nonumber &\le \frac{\alpha }{{\alpha  + 1}}({f' }(x + ) + \alpha {f'}(x - ))\int\limits_0^\infty  {M_{n,m,c}^{(\alpha )}(x,t)} \left| {t - x} \right|dt\\
 &\le \frac{\alpha }{{\alpha  + 1}}({f'}(x + ) + \alpha {f'}(x - )){\left( {F_{n,c}^{(\alpha )}\left( {{{({e_1} - x)}^2};x} \right)} \right)^{\frac{1}{2}}},
\end{align}
By using Remark $\ref{rem-1}$ and Lemma $\ref{lem-3}$, from $(\ref{v:4})-(\ref{v:6})$ then, we have
\begin{align}\label{v:7}
\nonumber F_{n,m}^{c,\alpha}\left( {f;x} \right) - f(x) \le & \left|{A_n^{(\alpha )}({f'};x) + B_n^{(\alpha )}({f'};x)} \right|\\
\nonumber & + \frac{{2\alpha }}{{\alpha  + 1}}\left| {{f' }(x + ) + \alpha {f'}(x - )} \right|\frac{{x(1 + cx)}}{n}\\
&+ \frac{\alpha }{{\alpha  + 1}}\left| {{f' }(x + ) - {f' }(x - )} \right|\sqrt {\frac{{2\alpha x(1 + cx)}}{n}},
\end{align}
where
\[A_n^{(\alpha )}({f'};x) = \int\limits_0^x {\left( {\int\limits_x^t {f_x'(u)du} } \right)M_{n,m,c}^{(\alpha )}(x,t)dt} ,\]
and \[B_n^{(\alpha )}({f'};x) = \int\limits_x^\infty  {\left( {\int\limits_x^t {f_x'(u)du} } \right)M_{n,m,c}^{(\alpha )}(x,t)dt} .\]
To estimate $A_n^{(\alpha )}({f^\prime };x)$, using integration by parts and applying  Lemma $\ref{lem-4}$ with $y = x - \frac{x}{{\sqrt n }}$, we obtain
\begin{align}
\nonumber A_n^{(\alpha )}({f'};x) &= \left| {\int\limits_0^x {\left( {\int\limits_x^t {f_x' (u)du} } \right){d_t}\zeta _{n,c}^{(\alpha )}(x;t)} } \right|\\	
\nonumber&= \left| {\int\limits_0^x {\zeta _{n,c}^{(\alpha )}(x;t)f_x'(t)dt} } \right|
\end{align}
\begin{align}\label{v:8}
\nonumber &\hspace{4.2cm}\le \int\limits_0^y {\left| {f_x'(t)} \right|\left| {\zeta _{n,c}^{(\alpha )}(x;t)} \right|dt + \int\limits_0^y {\left| {f_x'(t)} \right|\left| {\zeta _{n,c}^{(\alpha )}(x;t)} \right|dt} }\\
\nonumber &\hspace{4.2cm}\le \frac{{2\alpha x(1 + cx)}}{n}\int\limits_0^y {\mathop V\limits_t^x f_x'} {(x - t)^2}dt + \int\limits_y^x {\mathop V\limits_t^x f_x'} dt\\
&\hspace{4.2cm}\le \frac{{2\alpha x(1 + cx)}}{n}\int\limits_0^{x - \frac{x}{{\sqrt n }}} {\mathop V\limits_t^x f_x'} {(x - t)^2}dt + \frac{x}{{\sqrt n }}\mathop V\limits_{x - \frac{x}{{\sqrt n }}}^x f_x'.
\end{align}
Substituting $u = \frac{x}{{x - t}}$, we get
\begin{align}\label{v:9}
\nonumber \frac{{2\alpha x(1 + cx)}}{n}\int\limits_0^{x - \frac{x}{{\sqrt n }}} {\mathop V\limits_t^x f_x'} {(x - t)^2}dt& = \frac{{2\alpha x(1 + cx)}}{{nx}}\int\limits_1^{\sqrt n } {\mathop V\limits_{x - \frac{x}{u}}^x } f_x^{\prime}du\\
\nonumber &\le \frac{{2\alpha (1 + cx)}}{n}\sum\limits_{k = 1}^{\left[ {\sqrt n } \right]} {\int\limits_k^{k + 1} {\mathop V\limits_{x - \frac{x}{k}}^x } f_x^{\prime}du} \\
 & \le \frac{{2\alpha (1 + cx)}}{n}\sum\limits_{k = 1}^{\left[ {\sqrt n } \right]} {\mathop V\limits_{x - \frac{x}{k}}^x } f_x^{\prime}.	
\end{align}
From $(\ref{v:8})$ and $(\ref{v:9})$, we get
\begin{equation}\label{a:1}
A_n^{(\alpha )}({f'};x)=\frac{{2\alpha (1 + cx)}}{n}\sum\limits_{k = 1}^{\left[ {\sqrt n } \right]} {\mathop V\limits_{x - \frac{x}{k}}^x } f_x'+\frac{x}{{\sqrt n }}\mathop V\limits_{x - \frac{x}{{\sqrt n }}}^x f_x'.
\end{equation}
We can write
 \begin{align}\label{v:10}
 \nonumber B_n^{(\alpha )}({f^\prime };x) \le &\left| {\int\limits_x^{2x} {\left( {\int\limits_x^t {f_x^\prime (u)du} } \right){d_t}(1 - \zeta _{n,c}^{(\alpha )}(x;t))} } \right|\\
\nonumber &\hspace{.2cm}+ \left| {\int\limits_{2x}^\infty  {\left( {\int\limits_x^t {f_x^\prime (u)du} } \right){d_t}M_{n,m,c}^{(\alpha )}(x,t)} } \right|.
\end{align}
From the second part of the Lemma $\ref{lem-4}$, we get\\
$$M_{n,m,c}^{(\alpha )}(x,t) = {d_t}((1 - \zeta _{n,c}^{(\alpha )}(x;t))\hspace{.5cm}for\hspace{.2cm} t>x.$$
Hence
\[B_n^{(\alpha )}({f^\prime };x) = B_{n,1}^{(\alpha )}({f^\prime };x) + B_{n,2}^{(\alpha )}({f^\prime };x),\]
where
\[B_{n,1}^{(\alpha )}({f^\prime };x) = \left| {\int\limits_x^{2x} {\left( {\int\limits_x^t {f_x^\prime (u)du} } \right){d_t}(1 - \zeta _{n,c}^{(\alpha )}(x;t))} } \right|,\]
and
\[B_{n,2}^{(\alpha )}({f^\prime };x) = \left| {\int\limits_{2x}^\infty  {\left( {\int\limits_x^t {f_x^\prime (u)du} } \right){d_t}M_{n,m,c}^{(\alpha )}(x,t)} } \right|.\] 
Using integration by parts, applying Lemma $\ref{lem-4}$, ${1 - \zeta _{n,c}^{(\alpha )}(x;t)}\le 1$ and taking $t = x + \frac{x}{u}$ successively,
\begin{align}\label{v:11}
\nonumber B_{n,1}^{(\alpha )}({f^\prime };x) = &\left| {\int\limits_x^{2x} {f_x^\prime (u)du(1 - \zeta _{n,c}^{(\alpha )}(x;2x)) - \int\limits_x^{2x} {f_x^\prime (t)(1 - \zeta _{n,c}^{(\alpha )}(x;t))dt} } } \right|\\
\nonumber \le&\left| {\int\limits_x^{2x} {{(f_{}^\prime (u) - f_{}^\prime (x + ))}du} } \right|\left| {1 - \zeta _{n,c}^{(\alpha )}(x;2x)} \right| + \left| {\int\limits_x^{2x} {f_x^\prime (t)(1 - \zeta _{n,c}^{(\alpha )}(x;t))dt} } \right|\\
\nonumber\le& \frac{{2\alpha (1 + cx)}}{{nx}}\left| {f(2x) - f(x) - xf(x + )} \right|\\
 \nonumber&+ \frac{{2\alpha x(1 + cx)}}{n}\int\limits_{x + \frac{x}{{\sqrt n }}}^{2x} {\frac{{\mathop V\limits_x^t f_x^\prime }}{{{{(t - x)}^2}}}} dt + \int\limits_x^{x + \frac{x}{{\sqrt n }}} {\mathop V\limits_x^t f_x^\prime dt} \\
 \nonumber\le& \frac{{2\alpha (1 + cx)}}{{nx}}\left| {f(2x) - f(x) - xf(x + )} \right|\\
 &+ \frac{{2\alpha x(1 + cx)}}{n}\sum\limits_{k = 1}^{\left[ {\sqrt n } \right]} {\mathop V\limits_x^{x + \frac{x}{k}} f_x^\prime }  + \frac{x}{{\sqrt n }}\mathop V\limits_x^{x + \frac{x}{{\sqrt n }}} (f_x^\prime ).
\end{align}
Using Remark $\ref{rem-1}$ then, we have 
\begin{align}\label{v:12}
\nonumber B_{n,2}^{(\alpha )}({f^\prime };x) = &\left| {\int\limits_{2x}^\infty  {\left( {\int\limits_x^t {({f^{\prime}}(u) - {f^{\prime}}(x + ))du} } \right)M_{n,m,c}^{(\alpha )}(x,t)dt} } \right|\\
 \nonumber \le &\int\limits_{2x}^\infty  {\left| {f(t) - f(x)} \right|} M_{n,m,c}^{(\alpha )}(x,t)dt + \int\limits_{2x}^\infty  {\left| {t - x} \right|} \left| {f(x + )} \right|M_{n,m,c}^{(\alpha )}(x,t)dt\\
 \nonumber \le& \left| {\int\limits_{2x}^\infty  {f(t)M_{n,m,c}^{(\alpha )}(x,t)dt} } \right| + \left| {f(x)} \right|\left| {\int\limits_{2x}^\infty  {M_{n,m,c}^{(\alpha )}(x,t)dt} } \right|\\
\nonumber  &+ \left| {f(x + )} \right|{\left( {\int\limits_{2x}^\infty  {{{(e_1 - x)}^2}M_{n,m,c}^{(\alpha )}(x,t)dt} } \right)^{\frac{1}{2}}}\\
\nonumber\le& M\int\limits_{2x}^\infty  {{t^\gamma }M_{n,m,c}^{(\alpha )}(x,t)dt}  + \left| {f(x)} \right|\left| {\int\limits_{2x}^\infty  {M_{n,m,c}^{(\alpha )}(x,t)dt} } \right|\\
 &+ \sqrt {\frac{{2\alpha x(1 + cx)}}{n}} \left| {f(x + )} \right|.
\end{align}
For $t\ge 2x$, we get $t\le 2(t-x)$ and $x\le t-x$, applying H$\ddot{\text{o}}$lder's inequality, we have
\begin{align}\label{v:13}
\nonumber B_{n,2}^{(\alpha )}({f^{\prime} };x) \le & M{2^\gamma }{\left( {\int\limits_{2x}^\infty  {{{(e_1 - x)}^{2r}}M_{n,m,c}^{(\alpha )}(x,t)dt} } \right)^{\frac{\gamma }{{2r}}}}\\
\nonumber & + \frac{{2\alpha (1 + cx)}}{{nx}}\left| {f(x)} \right| + \sqrt {\frac{{2\alpha x(1 + cx)}}{n}} \left| {f(x + )} \right|\\
 = & M(\gamma,c,r,x) + \frac{{2\alpha (1 + cx)}}{{nx}}\left| {f(x)} \right| + \sqrt {\frac{{2\alpha x(1 + cx)}}{n}} \left| {f(x + )} \right|.
\end{align}
From (\ref{v:11}) and (\ref{v:13}), we get
\begin{align}\label{v:14}
\nonumber B_n^{(\alpha )}({f^\prime };x) =& \frac{{2\alpha (1 + cx)}}{{nx}}\left| {f(2x) - f(x) - xf(x + )} \right|\\
\nonumber &+ \frac{{2\alpha x(1 + cx)}}{n}\sum\limits_{k = 1}^{\left[ {\sqrt n } \right]} {\mathop {{\rm{ }}V}\limits_x^{x + \frac{x}{k}} f_x^\prime }  + \frac{x}{{\sqrt n }}\mathop {{\rm{ }}V}\limits_x^{x + \frac{x}{{\sqrt n }}} (f_x^\prime )\\
&+M(\gamma,c,r,x) + \frac{{2\alpha (1 + cx)}}{{nx}}\left| {f(x)} \right| + \sqrt {\frac{{2\alpha x(1 + cx)}}{n}} \left| {f(x + )} \right|.
\end{align}
From $(\ref{v:7})$, $(\ref{a:1})$ and $(\ref{v:14})$, we get our desired result.
\end{proof}


\begin{thebibliography}{1}
 \bibitem{TMM:15} 
 Acar T., Mishra L. N. and Mishra  V. N., \textit{Simultaneous approximation for generalized Srivastava-Gupta operators}, J. Funct. Spaces, (2015), ArticleID 936308, 1-11.
 \bibitem{AABK:18}
Agrawal P. N., Araci S., Bohner M. and Kumari L., \textit{Approximation degree of Durrmeyer-B\'ezier type operators}, J. Ineql. Appl., (2018), 1-17.  
\bibitem{Deo:2007}
Deo N., \emph{A note on equivalent theorem for Beta operators}, Mediterr. j. math., \textbf{(4)}(2007), 245-250.
\bibitem{Deo:12}
Deo N., \emph{Faster rate of convergence on Srivastava-Gupta operators}, Appl. Math. Comput., \textbf{218(21)}(2012), 10486-10491.
\bibitem{DPD:17}
Dhamija M, Pratap R and Deo N, {Approximation by Kantorovich form modified Sz\'{a}sz-Mirakyan Operators}. Appl Math Comput, \textbf{317}(2018), 109-120.
\bibitem{DT:87}
Ditzian Z. and Totik V., \textit{Moduli of smoothness}, Springer series in computational Mathematics, vol.9, (1987), Springer New York. 
\bibitem{GS:18}
Gupta V. and Srivastava H. M., \textit{A General family of the Srivastava-Gupta operators preserving linear functions}, European J. Pure Appl. maths, \textbf{ 11(3)}(2018), 575-579.
\bibitem{IY:05}
Ispir N. and Y\"{u}ksel I., \textit{On the B\'ezier variant of Srivastava-Gupta operators}, Appl. Math. E-Notes, \textbf{5}(2005), 129-137.
\bibitem{KA:17}
Kumar A., \textit{Approximation by Stancu type generalized Srivastava-Gupta operators based on certain parameter}, Khayaam J. Math., \textbf{3}(2017), 147-159.
\bibitem{MP:15}
 Maheswari (Sharma) P., \textit{On modified Srivastava-Gupta operators}, Filomat, \textbf{29(6)}(2015), 1173-1177.
\bibitem{NIA:17}
Neer T., Ispir N. and Agrawal P. N., \textit{B\'ezier variant of modified Srivastava-Gupta operators}, Revista de la Unio In Matema Itica Argentina, \textbf{58(2)}(2017), 199-214.
\bibitem{OD:10}
\"{O}zarslan M.A. and Duman O., \textit{Local approximation behaviour of modified SMK operators}, Miskolk Math.Notes, \textbf{11(1)}(2010), 87-99. 
\bibitem{SG:03}
Srivastava H. M. and Gupta V., \textit{A Certain family of summation-integral type operators}, Math. Comput. Modelling, \textbf{37}(2003), 1307-1315.  
\bibitem{VA:12}
Verma D. K. and Agrawal P. N., \textit {Convergence in simultaneous approximation for Srivastava-Gupta operators}, Math. Sci., Springer, \textbf {6}(2012), Article ID 22, 8p.
\bibitem{Yadav:14}
Yadav R., \textit{Approximation by modified Srivastava-Gupta operators}, Appl. Math. Comput., \textbf {226}(2014), 61-66.
\bibitem{IN:2007}
 Y\"{u}ksel I. and Ispir N., \textit {Weighted Approximation by a certain family of Summation integral-type operators}, Comput. Math. Appl.,  \textbf {52}(10-11)(2007), 1463-1470.           
\end{thebibliography}
\end{document}